\newline\rule{12cm}{1pt}
\renewcommand{\leq}{\leqslant}
\renewcommand{\geq}{\geqslant}
\newcommand*{\IfItalic}{%
  \ifx\f@shape\my@test@it
  \expandafter\@firstoftwo
  \else
  \expandafter\@secondoftwo
  \fi
}
\newcommand*{\my@test@it}{it}
\DeclareMathOperator{\Aut}{Aut}
\DeclareMathOperator{\ind}{ind}
\DeclareMathOperator{\supp}{supp}
\DeclareMathOperator{\Lin}{Lin}
\DeclareMathOperator{\Comb}{Comb}
\DeclareMathOperator{\Symm}{Symm}
\DeclareMathOperator{\LinZ}{Lin_\Z}
\newtheorem{teo}{Theorem}[section]
\newtheorem{lem}[teo]{Lemma}
\newtheorem{pro}[teo]{Proposition}
\newtheorem{cor}[teo]{Corollary}
\newtheorem{lemma}[teo]{Lemma}
\newcounter{numex}\setcounter{numex}{1}
\newtheoremstyle{exa}{3pt}{3pt}{\small\slshape}{1pt}{\scshape}{:}{1pt}{}
\newtheoremstyle{conj}{3pt}{3pt}{\slshape}{1pt}{\scshape}{:}{1pt}{}
{\theoremstyle{exa}
  \newtheorem{example}{Example}[section]}
{\theoremstyle{conj}
  }
\newtheorem*{Rstat}{\rotulo}
\newcommand{\rotulo}{}
\newenvironment{rstat}[1]
{\renewcommand\rotulo{\cref{#1}}\begin{Rstat}}{\end{Rstat}}
\newcommand{\R}{\ensuremath{\mathbb{R}}\xspace}
\newcommand{\Q}{\ensuremath{\mathbb{Q}}\xspace}
\newcommand{\Z}{\ensuremath{\mathbb{Z}}\xspace}
\newcommand{\N}{\ensuremath{\mathbb{N}}\xspace}
\newcommand{\one}{\ensuremath{\mathbf{1}}\xspace}
\newcommand{\CC}{\ensuremath{\mathcal{C}}\xspace}
\newcommand{\E}{\ensuremath{\mathcal{E}}\xspace}
\newcommand{\Sn}{\ensuremath{\mathcal{S}_n}\xspace}
\newcommand{\F}{\ensuremath{\mathscr{F}}\xspace}
\newcommand{\Hd}{\(\mathscr{H}\!\!\)-description\xspace}
\newcommand{\Vd}{\(\mathscr{V}\!\!\)-description\xspace}
\newcommand{\op}{\ensuremath{\oplus}\xspace}
\newcommand{\maxclosed}{$\max$-closed\xspace}
\renewcommand{\phi}{\ensuremath\varphi\xspace}
\newcommand{\QN}{\ensuremath{\mathbb{Q}^N}\xspace}
\newcommand{\Qpn}{\ensuremath{\Q_+^n}\xspace}
\newcommand{\RN}{\ensuremath{\mathbb{R}^N}\xspace}
\newcommand{\NN}{\ensuremath{\mathbb{N}^N}\xspace}
\newcommand{\ZN}{\ensuremath{\mathbb{Z}^N}\xspace}
\newcommand{\FN}{\mathbf{N}}
\newcommand{\FT}{\mathbf{T}}
\newcommand{\IN}{N}
\newcommand{\IT}{T}
\newcommand{\iinN}{\ensuremath{{i\in N}}}
\newcommand{\hatex}{\ensuremath{\hat\E_X}\xspace}
\newcommand{\haten}{\ensuremath{\hat\E_n}\xspace}
\newcommand{\ppi}{\ensuremath{P_{\!\pi}}\xspace}
\newcommand{\En}{\ensuremath{\E_n}\xspace}
\newcommand{\Ln}{\ensuremath{\mathscr{L}_n}\xspace}
\newcommand{\conj}[2]{\ensuremath{\{#1\,|\;#2\}}}
\newcommand\ORCiD[1]{\href{https://orcid.org/#1}{\mbox{\scalerel*{
        \begin{tikzpicture}[yscale=-1,transform shape]
          \pic{orcidlogo};
        \end{tikzpicture}
      }{|}}}}
\renewcommand\ORCiD[1]{}
\title{The cone of quasi-semimetrics and\\ exponent matrices of tiled orders}
\author[a1,a3]{Mikhailo Dokuchaev\fnref{f1}}
\ead{dokucha@gmail.com}
\author[a2,a3]{Arnaldo Mandel\corref{co}\fnref{f2}}
\ead{am@ime.usp.br}
\author[a1,a3]{Makar  Plakhotnyk\fnref{f3}} 
\ead{makar.plakhotnyk@gmail.com}
\address[a1]{Mathematics Department}
\address[a2]{Computer Science Department}
\address[a3]{Instituto de Matem\'atica e Estat\'\i stica, Universidade de
  S\~ao Paulo\\
  \emph{\small S\~ao Paulo, SP, Brazil 05508-970}}
\begin{document}

\begin{abstract}
  Finite quasi semimetrics on $n$ can be thought of as nonnegative
  valuations on the edges of a complete directed graph on $n$ vertices
  satisfying all possible triangle inequalities.  They comprise a
  polyhedral cone whose symmetry groups were studied for small $n$ by
  Deza, Dutour and Panteleeva.  We show that the symmetry and
  combinatorial symmetry groups are as they conjectured.

  Integral quasi semimetrics have a special place in the theory of
  tiled orders, being known as exponent matrices, and can be viewed as
  monoids under componentwise maximum; we provide a novel derivation
  of the automorphism group of that monoid.  Some of these results
  follow from more general consideration of polyhedral cones that are
  closed under componentwise maximum.
\end{abstract}

\begin{keyword}
  quasi-semimetric\sep
  polyhedral cone\sep
  exponent matrix\sep
  tiled algebra\sep
  face lattice\sep
  symmetry\sep
  combinatorial symmetry\sep
  max-plus algebra\sep
  \MSC[2010]{Primary: 20B25,52B15; Secondary: 06A07,06F05,52B20,15A80,16H99}
\end{keyword}


\maketitle

{
}


\section{Introduction}\label{sec:introduction}


Metric spaces are ubiquitous, making metrics a
well known concept.  Quasi-semimetrics (the term is not totally
standard) are a weakened form of metrics: for a given space \(X\),
\(d(x,y)\) is required to be a nonnegative real, and the triangle
inequality \(d(x,y)\leq d(x,z)+d(z,y)\) has to be satisfied.  It
relaxes two requirements on the definition of a metric, namely, it is
not required to be symmetric, and distinct elements are allowed to be
at ``distance'' zero.  Usually, a space with a fixed metrics or
quasi-semimetrics is studied, for the implied geometrical and
topological structure. There is a plethora of such specific examples in
\cite{DD}.

Here we take a different viewpoint: we fix the space \(X\) and
consider the set of all quasi-semimetrics on it as the main object.
Quasi-semimetrics form a convex cone of real functions on
\(X\times X\); for finite \(X\), that is a rational polyhedral cone,
which we denote \hatex (also \haten if \(X=[n]=\{1,2,\ldots,n\}\)) and
it has been the object of some study (\citet{DezaDutourPant},
\citet{DezaDezaDut}, \citet{DezaPant} and \citet{DDS}, which denote
it as \emph{QMET\({}_n\)}).  It is convenient, for the discussion
below, to have the explicit description of \haten as the subset of
\(M_n(\R)\) consisting of matrices \(X=(x_{ij})\) such that for all
pairwise distinct \(i,j,k\),
\begin{equation}\label{hatensys}
  \begin{array}{lrl}
    \IT_{ijk}:           & \; x_{ij}+x_{jk}            & \geq x_{ik}\,, \\
    \IN_{ij\phantom{k}}: & \; \phantom{x_{jk} +}x_{ij} & \geq 0\, ,     \\
    {}                 & \; \phantom{x_{ij}+}x_{ii}  & =0\, .
  \end{array}
\end{equation}

It is also convenient to think of such \(X\) as
an assignment of values to the edges of the complete directed graph with vertex set \([n]\).
Following \cite{DezaDezaDut}, the \(\IT_{ijk}\) are called
\emph{triangle inequalities}, while \(\IN_{ij}\) are
\emph{nonnegativity inequalities}. Also, to avoid special cases
requiring definition acrobatics, we stipulate that \(n\geq 3\),
whenever \haten is considered.

This family of cones is quite thoroughly discussed in
\cite{DezaDezaDut}, and one aspect will be relevant here: the
determination of their (Euclidean) symmetry group and combinatorial
symmetry group (those are defined in \cref{preliminaries}).

Given the description of \haten, there is a natural class of linear maps that
preserve the cone: permutations of coordinates that leave the system
describing \haten invariant.  Those are of two types:
\begin{enumerate}
    \item Any permutation \(\pi\) on the indices induces the permutation
  \(\ppi: x_{ij}\mapsto x_{\pi(i)\pi(j)}\).
    \item The transpose map (called reversal in \cite{DDS}) \(\tau\):
  \(x_{ij}\mapsto x_{ji}\).
\end{enumerate}
We call those \emph{system automorphisms}, and denote the group they form by
\Sn.  Since \(\tau\) commutes with all permutations of the first type, it
follows that \(\Sn\cong S_n \times \mathbb{Z}_2\).

These maps are isometries, so they are symmetries of cone; they
naturally induce a subgroup of the combinatorial automorphism group of
\haten, which we also will denote as \Sn.  \citet{DezaDezaDut}
verified computationally that \Sn is the whole symmetry and
combinatorial symmetry group of \haten for small \(n\) and conjectured
(also in \cite{DDS}) that this was the case in general.  Our main
result here settles those conjectures:

\begin{rstat}{comb-auto}
  The combinatorial symmetry group of \haten is \Sn.
\end{rstat}

As every cone, \hatex is a semigroup under addition; it is, moreover,
closed on an additional operation, which also turns it into a
commutative semigroup: componentwise maximum.  It is convenient to use
the infix notation \(a\oplus b=\max(a,b)\), for real \(a,b\), and
extend the notation componentwise to real vectors or matrices: if
\(u,v\in\R^I\), \(u\oplus v\) is defined by
\((u\oplus v)_i=u_i\oplus v_i\); we refer to this operation simply as
\(\max\).  These two operations (and the addition of a \(-\infty\))
turn \(\R^X\) into a semiring, a coproduct of copies of the tropical
semiring.  In this context, \hatex is a subsemiring of \(\R^X\), but 
not, however, a subsemimodule.

We will be especially interested in integer valued quasi-semimetrics,
and denote \(\En=\haten\cap\Z^n\).  There are many reasons to
concentrate on the integer points in a rational cone (see \cite{BG},
\cite{BR}), and in this particular case they have appeared in a
context far removed from the usual study of polyhedra, the theory of
tiled orders in algebras.  Those are described based on a discrete
valuation ring and a matrix, which, by \cite[Lemma 1.1]{jate2}, is an
integer valued quasi-semimetrics; in that context, those have been
called \emph{exponent matrices}.  We refer to \cite{jate2},
\cite{DArnold}, \cite{DemonetLuo}, \cite{DKKP} for definitions, more
details and some applications; we will not mention tiled orders any
more here, but honor them in the notation \hatex.

Clearly \Sn respects integrality and the operations of addition and
\(\max\), so it also restricts to \((\En,+)\) and \((\En,\op)\)
automorphisms.  A natural question is whether new symmetries or
automorphisms can crop up if we consider each of those monoids.

That is not the case: it was proved in \cite{DKKP} that the
automorphism group of \((\En,+)\) and \((\En,\op)\) is \Sn. 
The proofs were considerably ad-hoc and elaborate, and
here we will present new, and somewhat more conceptual proofs.

Any automorphism of \((\En,+)\) naturally extend to a linear
automorphisms of \haten, thus respecting its face-lattice, as well as
any symmetry of \haten does.  So, our main theorem implies that both
groups coincide with \Sn (we elaborate on that in
\cref{preliminaries}).

For some perspective on this result, we refer the reader to
\cite{BSPRS}; there, some elegant algorithms for computing the
symmetry and combinatorial symmetry group of a given polyhedral cone
are shown; both are reduced to finding the automorphism group of a
colored graph.  There is a difference, though.  The graph for the
symmetry group has as vertices the extreme rays or the facets of the
cone, so it has polynomial size in the description of the cone.  For
the combinatorial symmetry group, the graph is the incidence graph of
extreme rays and facets; this may have exponential size relative to a
given description, and this is indeed the case with \haten.  This
method underlies our proof of the main theorem, but we were able to
finesse the problem of describing the extreme rays by showing that
just a small part of them suffices.

The automorphism group of \((\En,\oplus)\) are derived from a more general
view of cones closed under \(\max\), and their integer point submonoids.  In
this case, in general, a \op-automorphism does not need to respect the
face-lattice (\cref{op-nonrespecting}), and may even not be extendable to the
whole cone.  We present some conditions which imply that the automorphisms of
the integer submonoid of a \maxclosed cone are induced by permutations of the
coordinates.  This is enough to show that \(\Aut(\En,\oplus)=\Aut(\En,+)\)

The article proceeds as follows: We start by recalling some basics on
polyhedral cones and proving some initial facts in
\cref{preliminaries}.  \cref{sec:addcomb-autom} proves the main
theorem.  \cref{sec:max-cones} presents some basic facts about
\maxclosed cones.  This is followed by \cref{sec:op-automorphisms}
where we prove the aforementioned result on \op-automorphism of the
integer submonoid of a \maxclosed cone, entailing, in particular, that
\(\Aut(\En,\op)=\Sn\).

\section{Preliminaries on polyhedral cones}\label{preliminaries}

We present here a summary of facts and terminology about polyhedral cones;
some of these have been appropriately streamlined for our needs.
For more detailed information and proofs the reader is referred to
\cite{BG,Sc,DDS}.  Besides the definitions, we make several assertions
about cones without further ado; they are well-known facts that can be
found in the references, and are easy exercises.  As the cones we are interested in
are the \haten, we illustrate the concepts as they directly apply to them.

In what follows, \(I,J, N\) will denote finite sets; \R, \Z, \(\R_+\), \N will
stand for the sets of real numbers, integers, non-negative reals and
non-negative integers, respectively. In the vector space \RN\ we single out
the \emph{canonical basis vectors} \(e_i\) and the \emph{one vector}
\(\one=\sum_ie_i\); on \RN, \(x\leq y\) means \(x_i\leq y_i\) for all
\(i\in N\), and \(x\geq y\) means \(y\leq x\).  The \emph{support} of
\(v\in\RN\) is \(\supp(v)= \conj{i\in N}{v_i\neq 0}\) and its cardinality
will be denoted \(s(v)\).  A subset of \RN is \emph{full-dimensional} if it
linearly spans the whole space.
We will consider subsets \(S, S'\) of \RN\ to be equivalent if there is a
bijection between \(S\) and \(S'\) such that the image of each vector is a
positive scalar multiple of it. A \emph{ray} is an equivalence class of a nonzero
singleton, and we will say that \(S\) is \emph{clean} if its elements belong
to different rays.  A point (or ray) \(x\) that satisfies a linear inequality
\(ax\geq 0\), does it \emph{exactly} if \(ax=0\) and \emph{strictly} if
\(ax>0\). A ray is \emph{rational} if it contains a vector with rational
coordinates.

\begin{example}\label{vector-space-haten}
  As defined, the cone \haten lies in the subspace of \(n\times n\)
  real matrices with null diagonal; it is convenient to consider this
  subspace to be the whole ambient space.  So, for a fixed \(n\), let
  \(N_n=\conj{(i,j)}{1\leq i,j\leq n, i\neq j}\), and we take the space
  \(\R^{N_n}\) to be the one where \haten is defined.  The vectors
  in this space are still better visualized (and referred to) as matrices
  with a blotted diagonal, rather than a linear list of coordinates.
  Written in the format \(ax\geq 0\), the defining inequalities
  \(\IT_{ijk}\) take the form \(x_{ij}+x_{jk}-x_{ik}\geq 0\), whose
  coefficient vector \(a\) has support of size 3.
\end{example}


A finite set of non-zero vectors \(S\) is said to be a \emph{\Vd} of the set
\(\conj{\sum_{v\in S}\lambda_v
  v}{\lambda_v\in\R_+\ \text{for all}\ v\in S}\), and $S$ is also called
an \emph{\Hd} of
\(\conj{x\in \RN}{v^tx\; \geq 0\ \text{for all}\ v\in S}\).  We may think
of \(S\) as the set of rows of a matrix \(A\); then \(S\) is an \Hd of
\conj{x\in\RN}{Ax\geq 0}.  The \emph{Weyl-Minkowski Theorem}
(see~\cite{Sc} and~\cite{BG}) states that a set has a \Vd\ if and only if it
has an \Hd; further, it has a \Vd with rational rays if and only if it has an
\Hd in which the matrix has only rational entries.  A set with either
description is called a \emph{polyhedral cone}, and it is a
\emph{rational polyhedral cone} if it has either description using only rational
data.  Clearly, equivalent sets describe the same cones; either
way, just clean descriptions suffice.

A cone \CC\ is \emph{pointed} if the only linear subspace it contains is
\((0)\).  A cone \CC is full-dimensional if there is a point that satisfies
all inequalities of an \Hd strictly.

\begin{example}\label{haten-as-cone}
  \haten was defined by inequalities; that is, we have an explicit \Hd
  of \haten, the corresponding set \(S\) consisting of coefficient
  vectors (matrices, actually) of those inequalities.  The
  coefficients are just \(0,1,-1\), which shows that \haten is a
  rational polyhedral cone, and also that the description is clean.
  Moreover, each inequality induces a facet (a concept defined below
  and a fact proved in \cref{sec:addcomb-autom}).  The matrix \one
  satisfies all inequalities strictly, showing that \haten is
  full-dimensional; the nonnegativity inequalities easily imply that
  \haten is pointed.

  The Weyl-Minkowski Theorem implies that \haten also has a \Vd.
  Describing it explicitly is a possibly impossible task.  In
  \cite{DezaDutourPant} and \cite{DDS} there are explicit descriptions
  of the rays for \(n\leq 4\).  After that, there are descriptions of
  some families and some computational results.  The number of rays
  grows exponentially with \(n^2\), so computations quickly stop
  short.  However, a small family of rays described in
  \cref{sec:addcomb-autom} will be crucial in the proof of the main
  theorem.
\end{example}

A linear inequality \(ax\geq 0\), with \(a\neq 0\) that holds for
every \(x\in\CC\) is a \emph{valid inequality} for \CC; the
\emph{face} of \CC\ it \emph{induces} is the set
\conj{x\in\CC}{ax = 0}.  We also consider \CC a (improper) face. The
faces of a cone, ordered by inclusion, comprise a lattice, the
\emph{face-lattice} of the cone, with intersection as the meet
operation.  The face lattice is finite and graded.  A \emph{facet} is
a maximal proper face. If \CC\ is full-dimensional, every facet is
induced by a unique (up to equivalence) inequality, and the collection
of such \emph{facet-inequalities} comprises the unique minimal \Hd\ of
\CC.  The face-lattice is \emph{coatomistic}, that is, every proper
face is an intersection of facets; equivalently, in any \Hd, a face is
a subset of \CC that satisfies some fixed subset of the inequalities
exactly.  A point is \emph{interior} to a face if the valid
inequalities it satisfies exactly are precisely those that induce the
face; every face has an interior point.  Equivalently, a point \(p\)
is interior to a face \(F\) if and only if the facets containing \(p\)
are those that contain \(F\); in particular, in a clean \Hd, a
facet-inequality is one such that there is a point satisfying that one
exactly, and all other inequalities strictly.  It also follows that
\CC is full-dimensional if and only if it has an interior point.  If
the cone is pointed, the minimal non-zero faces are rays, so called
\emph{extreme rays}, and these comprise the unique minimal \Vd\ of the
cone.  The face-lattice is also \emph{atomistic}: every face is a join
of extreme rays.

An \emph{integer cone} is the intersection \(\CC_{\Z}\) of a rational cone \CC
with \(\Z^N\).  Such a cone is an additive submonoid of \(\Z^N\), and it is
finitely generated.  If \CC\ is pointed, there exists a unique minimal set of
generators, called a \emph{Hilbert basis}, and it is finite (see
\cite[Theorem 16.4]{Sc}, \cite[Chapter 2]{BG}); it contains one point in each extreme ray, and
usually some more points.  


The presentation above relies on a fixed system of coordinates, given
by the basis of elementary vectors.  A more elegant, coordinate free
approach is used in \cite{BG}, and it gives an account of all relevant
concepts related to the face-lattice.  However, working with a fixed
basis comes naturally when handling systems of linear inequalities;
moreover, the \(\max\) operation is naturally and traditionally
defind based on coordinates.




A \emph{linear automorphism} of a cone \(\CC\subseteq\RN\) is a linear
automorphism \(\varphi\) of \RN such that \(\varphi(\CC)=\CC\).  If
\(\varphi\) is an isometry preserving Euclidean distance, it is said
to be a \emph{isometry} of \CC (\cite{DDS} calls it a
\emph{symmetry} of \CC). It is clear from the definition that any
linear automorphism of \CC maps faces to faces, and induces an
automorphism of the face lattice of \CC; in particular, the
families of extreme rays and of facets are invariant.

We single out four symmetry groups associated with a given cone \CC (we
combine the notation of \cite{BSPRS} and \cite{DDS}, with occasional slight
change of meaning):
\vspace{-1ex}
\begin{itemize}
    \item \(\Comb(\CC)\) - the \emph{combinatorial symmetry group},
  consisting of all automorphisms of the face-lattice of \CC.
    \item \(\Lin(\CC)\) - the \emph{linear symmetry group},
  consisting of all linear automorphisms of \CC.
    \item \(\Symm(\CC)\) - the \emph{symmetry group},
  consisting of all isometries of \CC (named as in  \cite{DDS}).
    \item \(\LinZ(\CC)\) - the \emph{integral symmetry group},
  consisting of all linear automorphisms leaving \(\CC\cap \ZN\) invariant.
\end{itemize}

So, any linear automorphism of \CC induces an automorphism of the face
lattice of \CC, and this induction is indeed a group homomorphism
\(\ind\!:\!\Lin(\CC)\rightarrow\Comb(\CC)\).  As the face-lattice is both
atomistic and coatomistic, both the set of facets and the set of extreme rays
are bases for the permutation group \(\Comb(\CC)\); that is, any element of
this group is fully determined by its action on either set.  The approach
favored in \cite{BSPRS} is to represent automorphisms by their action on
extreme rays, while in \cref{sec:addcomb-autom} we find it convenient to
represent them by their action on the facets.  After all, convenience depends
on the available description of the cone.

\begin{pro}\label{injmodcomb}
  If \CC is a full dimensional pointed rational cone, the restrictions of
  \(\ind\) to \(\Symm(\CC)\) and \(\LinZ(\CC)\) are injective.
\end{pro}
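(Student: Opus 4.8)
The plan is to study the kernel of \(\ind\) on each of the two groups, using the fact that any automorphism inducing the trivial face-lattice automorphism must fix every extreme ray. First I would note that, since \CC is pointed, the atoms of its face-lattice are precisely its extreme rays, and since \CC is full-dimensional these rays span \RN. Suppose \(\varphi\) lies in the kernel of \(\ind\), so that \(\varphi\) induces the identity on the face-lattice and therefore fixes each extreme ray setwise. Being linear and acting on a one-dimensional ray, \(\varphi\) multiplies a chosen representative \(r\) of each extreme ray by a scalar \(\lambda_r\); this scalar is positive, because \(\varphi\) preserves \CC and hence maps each ray into its own positive half. Thus \(\varphi(r)=\lambda_r r\) with \(\lambda_r>0\), and it suffices to prove \(\lambda_r=1\) for all extreme rays: then \(\varphi\) is the identity on a spanning set, so \(\varphi=\mathrm{id}\), and \(\ind\) is injective on the group in question.

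For the symmetry group \(\Symm(\CC)\) this is immediate: an isometry preserves the Euclidean norm, so \(\|\varphi(r)\|=\|r\|\) gives \(\lambda_r\|r\|=\|r\|\) and hence \(\lambda_r=1\).

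For the integral symmetry group \(\LinZ(\CC)\) I would instead exploit rationality. Since \CC is a rational cone, each extreme ray contains a primitive integer vector, which I take as the representative \(r\in\CC\cap\ZN\). As \(\varphi\) leaves \(\CC\cap\ZN\) invariant, \(\varphi(r)=\lambda_r r\) is again integral; writing \(\lambda_r\) in lowest terms and using that the coordinates of \(r\) have greatest common divisor \(1\) forces \(\lambda_r\) to be a positive integer. Because \(\ind\) is a group homomorphism, \(\varphi^{-1}\) also lies in its kernel and also leaves \(\CC\cap\ZN\) invariant; the same argument applied to \(\varphi^{-1}\) shows that \(1/\lambda_r\) is a positive integer as well, whence \(\lambda_r=1\).

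The main obstacle — really the single conceptual point — is the opening structural reduction: establishing that an element of the kernel of \(\ind\) must fix each extreme ray individually and act on it by a positive scalar. This rests on identifying the extreme rays with the atoms of the face-lattice (pointedness) and on their spanning \RN (full-dimensionality). Once this is in place, the two cases are settled by the norm constraint and by a short greatest-common-divisor argument together with invertibility, respectively.
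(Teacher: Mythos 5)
Your proposal is correct and follows essentially the same route as the paper's proof: an element of the kernel of \(\ind\) fixes every extreme ray, acts on each by a positive scalar, and that scalar is forced to be \(1\) by norm preservation in the \(\Symm(\CC)\) case and by invariance of the integral points of the (rational) ray in the \(\LinZ(\CC)\) case. You merely spell out details the paper leaves implicit, namely the primitive-vector/gcd argument together with applying the same reasoning to \(\varphi^{-1}\), and the spanning of \(\RN\) by the extreme rays.
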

\begin{proof}
  Consider first the restriction to \(\LinZ(\CC)\). If \(\varphi\) is in
  the kernel of this map, it leaves each ray invariant.  But the set of
  integral vectors in the ray is also invariant, and that implies that
  \(\varphi\) is the identity on that ray.  So it is the identity
  automorphism.  For \(\Symm(\CC)\), we apply the same argument to the unit
  vector in each ray.
\end{proof}

\begin{example}
  Notice that, in spite of the similarity exposed in the proof of
  \cref{injmodcomb}, \(\Symm(\CC)\) and \(\LinZ(\CC)\) can be quite
  different. Consider the cone
  \(\CC_1=\conj{x\in\R^2}{x_2\geq 0, x_1-x_2\geq 0}\); the map
  given by {\tiny\(\left(
      \begin{array}[h]{rr}
        1 & 0  \\
        1 & -1
      \end{array}
    \right)
    \)}
  is in  \(\LinZ(\CC_1)\) but not in \(\Symm(\CC_1)\).  On the other hand, for
  \(\CC_2=\conj{x\in\R^2}{x_2\geq 0, 3x_1-4x_2\geq 0}\), the map given by
  {\tiny\(\frac15\left(
      \begin{array}[h]{rr}
        4 & 3  \\
        3 & -4
      \end{array}
    \right)
    \)}
  is in  \(\Symm(\CC_2)\)  but not in \(\LinZ(\CC_2)\).
\end{example}

The restrictions of \(\ind\) above (actually, \(\ind\)
itself) can be far from surjective.  To see this, take your favorite highly
symmetric cone and apply to it a linear transformation that is neither
orthogonal nor integral.  The poor image's symmetry and integral symmetry groups
becomes severely handicapped, while the combinatorial symmetry group gets away
scot-free.

The astute reader may complain that this is a trick; a deeper
construction of \citet{BEK} presents a polytope lattice and a
combinatorial symmetry that cannot be realized linearly for any
polytope with that face-lattice.  A standard construction turns 
this into a result about cones.

Going back to the cone \haten, we notice that \Sn consists of maps that are
both isometries and integral, that is, we have the following diagram of
monomorphisms:
\begin{figure}[h]
  \[
    \footnotesize
    \xymatrix{&  & \Symm(\haten)\quad\quad \ar@<4pt>[dr]^\ind & \\
      \Sn \ar[r]^-\subseteq &\Symm(\haten)\cap\LinZ(\haten)
      \qquad\qquad\qquad\ar[ur]^\subseteq\ar@<4pt>[dr]^\subseteq &  & \Comb(\haten) \\
      & & \LinZ(\haten)\quad \ar@<4pt>[ur]^\ind &
    }
  \]
  \caption{}
  \label{eq:SCinter}
\end{figure}

As it turns out in \cref{allequal}, the composition
\(\Sn\rightarrow \Comb(\haten)\) is surjective, hence all inclusions are
equalities.

\section{Symmetries and combinatorial automorphisms of \haten}\label{sec:addcomb-autom}

The main result in this section is \cref{comb-auto}, which describes
the combinatorial automorphism group of \haten.  For \(n\leq 5\) this
has been done in \cite{DezaDutourPant}, computationally.  As
noted before, while some shortcuts exist, the general method for
computing the combinatorial automorphism group of a cone is to
determine the bipartite incidence graph of extreme rays and facets,
and then computing the automorphism group of the graph.  Although no
polynomial algorithm is yet known for such computation, there are very
good programs \cite{MKP} that can handle graphs of fairly large size.

As per \cref{facet-ineq}, \haten has \(n^2(n-1)/2\) facets.  However,
\cite{DezaDutourPant} tells us that for \(n\geq 6\), the number of extreme
rays is already too big for polite computational society.

As it turns out, there is an orbit of \Sn, denoted \Ln, consisting of
\(2n\) extreme rays, such that it is enough to consider the incidence graph
of facets and \Ln to clinch \(\Comb(\haten)\); the following nice
properties hold:
\begin{enumerate}
    \item \Ln is an orbit of \(\Comb(\haten)\) (\cref{LCinv}).
    \item The action of \(\Comb(\haten)\) on \Ln is the same as the action of \Sn (\cref{RC-ind}).
    \item The action above is faithful.
\end{enumerate}
The last item is what will finally establish the main theorem.

Recall that \(\IT_{ijk}\) and \(\IN_{ij}\) are the defining
inequalities for \haten; in what follows, the same labels in boldface
(\(\FT_{ijk}\), etc.) will denote the corresponding faces of \haten,
which turn out to be facets of \haten.



The following facts about some special members of \haten appear in part in
\cite[Theorem 5]{DezaDutourPant}, and in \cite[Theorem 1.1]{DKKP}.  We present
them here with proofs, for completeness.  For each proper subset \(I\) of
\(\{1,\ldots,n\}\), the associated \emph{oriented cut quasi-semimetrics}
(\cite{DezaPant,DezaDutourPant,DD}) is the binary exponent matrix \(\delta(I)\)
such that \(\delta(I)_{ij}=1\) if and only if \(i\in I,j\notin I\).

\begin{pro}\label{cut-semi}
  Considering \haten:
  \begin{enumerate}
      \item The oriented cut quasi-semimetrics are those with minimal nonempty supports.
      \item If \(A\in\haten\) and \(\supp(A)=\supp(\delta(I))\), then \(A\) is a
    scalar multiple of \(\delta(I)\).
      \item All oriented cut quasi-semimetrics are extreme rays of \haten.
  \end{enumerate}
\end{pro}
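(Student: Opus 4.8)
The plan is to base the entire argument on one structural fact: for every $A=(x_{ij})\in\haten$ the set of its zero entries is transitive. Indeed, if $x_{ij}=x_{jk}=0$ with $i,j,k$ distinct then $\IT_{ijk}$ forces $x_{ik}\le 0$, and $\IN_{ik}$ forces $x_{ik}\ge 0$, so $x_{ik}=0$. Hence, setting $i\to j$ whenever $i=j$ or $x_{ij}=0$ defines a preorder on $[n]$, whose induced equivalence partitions $[n]$ into classes carrying a partial order. The first thing to record is that $\supp(A)=\{(i,j):i\not\to j\}$ is a union of blocks $C\times D$ over ordered pairs of distinct classes with $C\not\to D$; in particular the support is determined by the preorder, and coarser preorders (more related pairs) give smaller supports. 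A direct check shows each $\delta(I)$ (for $\emptyset\ne I\subsetneq[n]$) lies in \haten, that its preorder has exactly the two classes $I$ and $[n]\setminus I$ with $[n]\setminus I\to I$ but $I\not\to[n]\setminus I$, and therefore $\supp(\delta(I))=I\times([n]\setminus I)$.

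For (a) I would prove the two halves separately, both phrased on supports. To see each cut support is minimal, suppose $B\in\haten$ has $\emptyset\ne\supp(B)\subseteq I\times([n]\setminus I)$; then every pair outside this block is a zero of $B$, so $I$ is a single $\to$-class, $[n]\setminus I$ is another, and $[n]\setminus I\to I$. If some $(i_1,j_1)\in I\times([n]\setminus I)$ were also a zero, then for any $(i_0,j_0)\in\supp(B)$ the chain $i_0\to i_1\to j_1\to j_0$ would force $x_{i_0j_0}=0$ by transitivity, a contradiction; hence $\supp(B)=I\times([n]\setminus I)$. Conversely, starting from any $A$ whose preorder has at least two classes, pick a class $C$ maximal in the partial order; since no class lies strictly above it, $i\not\to j$ for every $i\in C$, $j\notin C$, so $\supp(\delta(C))=C\times([n]\setminus C)\subseteq\supp(A)$. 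Thus every nonempty support contains a cut support, and combined with their minimality the minimal nonempty supports are precisely the cut supports.

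Part (b) is then a short triangle-inequality computation: when $\supp(A)=I\times([n]\setminus I)$, for distinct $i,i'\in I$ and $j\in[n]\setminus I$ the equality $x_{ii'}=0$ fed into $\IT_{i i' j}$ and $\IT_{i' i j}$ yields $x_{i'j}\ge x_{ij}\ge x_{i'j}$, so the entry is independent of the source in $I$; the mirror argument using $x_{jj'}=0$ for $j,j'\in[n]\setminus I$ makes it independent of the target, so $A$ is constant on its support and equals that constant times $\delta(I)$. Finally (c) is formal: in a pointed cone it suffices to show that $\delta(I)=u+v$ with $u,v\in\haten$ forces $u,v$ to be nonnegative multiples of $\delta(I)$; nonnegativity of the coordinates gives $\supp(u),\supp(v)\subseteq I\times([n]\setminus I)$, the minimality in (a) forces each nonzero summand to have support exactly $I\times([n]\setminus I)$, and (b) then makes it a positive multiple of $\delta(I)$. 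The single substantive step is the minimality half of (a); everything else is bookkeeping once the zero-set is seen to be transitive, so that transitivity—yielding the preorder and its class decomposition—is the conceptual core I would set up first.
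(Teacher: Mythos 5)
Your proposal is correct, but it is organized quite differently from the paper's proof. For part (a), the paper gets a cut inside \(\supp(A)\) in two lines: fixing \(r,s\) with \(A_{rs}>0\), it sets \(I=\{k \mid A_{rk}=0\}\) and applies \(\IT_{rij}\) once to conclude \(\supp(\delta(I))\subseteq\supp(A)\). Your preorder on \([n]\) (whose transitivity is exactly the same triangle-plus-nonnegativity observation), its equivalence classes, and the choice of a maximal class accomplish the same thing with more machinery --- the paper's \(I\) is precisely the principal up-set of \(r\) in your preorder. What your version buys is that it makes the other half of (a) explicit: your chain argument \(i_0\to i_1\to j_1\to j_0\), showing that a nonempty support contained in a cut block must fill the whole block, is genuinely needed for the word ``minimal,'' and the paper leaves that half to the reader (it follows from its construction together with the observation that \(J\times J^c\subseteq I\times I^c\) forces \(J=I\)). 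Part (b) is the same computation in both proofs, down to the choice of triangle inequalities. For part (c) you take a genuinely different route: the paper takes an interior point \(A\) of the minimal face containing \(\delta(I)\), notes that it satisfies exactly the same nonnegativity inequalities and hence has the same support, and then invokes (b) to conclude the face is one-dimensional; you instead use the characterization of extreme rays of a pointed cone as indecomposable directions (\(\delta(I)=u+v\) with \(u,v\in\haten\) forces \(u,v\) to be nonnegative multiples of \(\delta(I)\)). Both are legitimate; the paper's version stays within the face/interior-point vocabulary set up in its preliminaries and reused in the later lemmas, while yours relies on the standard equivalence between ``minimal nonzero face'' and ``indecomposable element'' of a pointed cone, which is well known but not stated in the paper, so you should cite or prove it if you keep that route.
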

\begin{proof}
  For (a), let \(A\in\haten\), and suppose \(A_{rs}>0\).  Let
  \(I=\conj{k}{A_{rk}=0}\); this is a proper subset of indices, as \(r\in I\),
  \(s\notin I\). Then, if \(i\in I, j\notin I\), \(A_{ri}= 0 \neq A_{rj}\), and
  \(\IT_{rij}\) implies \(A_{ij}>0\).  It follows that
  \(\supp(\delta(I))\subseteq\supp(A)\).  For (b), let \(i\in I\) and suppose
  there exist distinct \(j,k\notin I\).  Applying \(\IT_{ijk}\) and \(\IT_{ikj}\) we
  conclude that \(A_{ij}=A_{jk}\); that is, all nonzero terms on each row of \(A\)
  are equal; the same argument applies to columns.  So, all nonzero elements of
  \(A\) are equal, and the result follows. Finally, for (c), let \(A\) be an
  interior point of the minimal face containing \(\delta(I)\).  It must satisfy
  exactly the same inequalities as \(\delta(I)\); in particular, the same
  \(\IN_{ij}\), hence \(\supp(A)=\supp(\delta(I))\).  From part (b), it follows that
  the face has dimension 1.
\end{proof}


This is a technical workhorse for what follows:

\begin{lem}\label{faces-inter}
  For all three distinct indices \(i,j,k\) the only defining inequalities
  of \haten exactly satisfied by \(\FN_{ij} \cap \FN_{jk}\) are \(\IN_{ij}, \IN_{jk},\IN_{ik}\)
  and \(\IT_{ijk}\).
\end{lem}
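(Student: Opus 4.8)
The plan is to identify the face \(F=\FN_{ij}\cap\FN_{jk}\) with the set \(\conj{x\in\haten}{x_{ij}=x_{jk}=0}\) and to pin down exactly which defining inequalities vanish on all of it. Writing \(Z=\{(i,j),(j,k),(i,k)\}\), the easy half is forcing: if \(x\in F\) then \(\IT_{ijk}\) gives \(x_{ik}\leq x_{ij}+x_{jk}=0\), while \(\IN_{ik}\) gives \(x_{ik}\geq0\), so \(x_{ik}=0\) and \(\IT_{ijk}\) holds with equality; thus \(\IN_{ij},\IN_{jk},\IN_{ik},\IT_{ijk}\) are all satisfied exactly on \(F\). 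It remains to show that no other defining inequality is forced to equality, i.e.\ to produce a point interior to \(F\). For this I will use an averaging device: if for each defining inequality \(\phi\notin\{\IN_{ij},\IN_{jk},\IN_{ik},\IT_{ijk}\}\) I can exhibit a witness \(A_\phi\in F\) satisfying \(\phi\) strictly, then their average \(A\) still lies in \(F\) and has \(\phi(A)\geq\frac1m\phi(A_\phi)>0\) for every such \(\phi\) (all other terms being nonnegative, since each \(A_\psi\in\haten\)), so \(A\) is interior to \(F\) and the claim follows.

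For witnesses I first use potentials. For \(\ell\in\R^{[n]}\) let \(A^\ell\) be the matrix with \(A^\ell_{rs}=\max(\ell(r)-\ell(s),0)\); integrating the oriented cut quasi-semimetrics \(\delta(\conj{v}{\ell(v)\geq c})\) over \(c\) exhibits \(A^\ell\) as a nonnegative combination of cuts, which lie in \haten by \cref{cut-semi}, and moreover \(A^\ell_{rs}=0\) exactly when \(\ell(r)\leq\ell(s)\). Consequently \(A^\ell\in F\) whenever \(\ell(i)\leq\ell(j)\leq\ell(k)\). Now fix \((r,s)\notin Z\) for a nonnegativity inequality \(\IN_{rs}\): the single extra demand \(\ell(r)>\ell(s)\) is compatible with the chain \(\ell(i)\leq\ell(j)\leq\ell(k)\) precisely because the chain forces \(\ell(r)\leq\ell(s)\) only for \((r,s)\in Z\); choosing such an \(\ell\) gives \(A^\ell\in F\) with \(A^\ell_{rs}>0\), the desired witness.

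The triangle inequalities are the delicate point. A direct computation with \(\max(a,0)+\max(b,0)\geq\max(a+b,0)\) shows that \(\IT_{rst}\) holds strictly at \(A^\ell\) exactly when \(\ell(s)\) is a strict peak or a strict valley among \(\ell(r),\ell(s),\ell(t)\). Requiring \(s\) to be a peak (resp.\ valley) while keeping the chain \(\ell(i)\leq\ell(j)\leq\ell(k)\) is consistent unless the chain forces the opposite comparison, and a short analysis of which pairs are comparable in the chain shows that both the peak and the valley demand become inconsistent only when \(s=j\) and \(\{r,t\}=\{i,k\}\); that is, only for \((r,s,t)\in\{(i,j,k),(k,j,i)\}\). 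Since \(\IT_{ijk}\) is the forced inequality, the sole triangle inequality left without a potential witness is \(\IT_{kji}\).

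For \(\IT_{kji}\) I give an explicit witness. On the index set \(\{i,j,k\}\) take the quasi-semimetric with the three forced zeros together with \(x_{ji}=x_{kj}=2\) and \(x_{ki}=3\) (so \(x_{kj}+x_{ji}=4>3=x_{ki}\)), and extend it to \([n]\) by setting every edge incident to a vertex outside \(\{i,j,k\}\) equal to a large constant \(M\); for \(M\) large this is a member of \haten lying in \(F\) and satisfying \(\IT_{kji}\) strictly. Averaging all the witnesses produced above yields a point interior to \(F\), completing the proof. I expect the bookkeeping of the triangle inequalities—and in particular isolating \(\IT_{kji}\) as the unique triangle needing a non-potential witness—to be the main obstacle; everything else reduces to the additivity of the cone and the averaging observation.
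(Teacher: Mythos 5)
Your proof is correct, but it reaches the goal by a genuinely different construction than the paper. Both arguments share the same skeleton: the forcing direction (using \(\IT_{ijk}\) and \(\IN_{ik}\) to get \(x_{ik}=0\)) is identical, and both then exhibit a point of \(\FN_{ij}\cap\FN_{jk}\) satisfying every other defining inequality strictly. The paper does this with a single explicit matrix \(H(i,j,k)\) (entries \(0,3,4,5\) in a fixed pattern) and disposes of the verification by a case analysis it largely leaves to the reader. You instead use convexity: averaging reduces the problem to producing, for each non-forced inequality, \emph{some} witness in the face satisfying it strictly, and you obtain almost all witnesses uniformly from potential matrices \(A^\ell_{rs}=\max(\ell(r)-\ell(s),0)\), which lie in \haten as nonnegative combinations of the oriented cut quasi-semimetrics of \cref{cut-semi}. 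The peak/valley analysis then isolates \(\IT_{kji}\) as the unique inequality no potential can witness strictly (indeed every \(A^\ell\) with \(\ell(i)\leq\ell(j)\leq\ell(k)\) satisfies \(\IT_{kji}\) with equality, since the three relevant differences telescope), and your ad hoc witness for it checks out: on \(\{i,j,k\}\) the values \(x_{ji}=x_{kj}=2\), \(x_{ki}=3\) with the three forced zeros satisfy all six local triangle inequalities, and the extension by a large constant \(M\) is easily seen to lie in \haten. What each approach buys: the paper's is shorter and fully self-contained; yours is modular (each witness is trivial to verify), reuses machinery already in the paper, and is more explanatory, in that it identifies \emph{why} \(\IT_{kji}\) is the delicate inequality near this face — it is an automatic consequence of the forced equalities for an entire natural subfamily of points, though not for the whole face. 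One small caveat: your claim that peak and valley demands are jointly inconsistent only when \(s=j\) and \(\{r,t\}=\{i,k\}\) is asserted rather than written out; it is true (only \(j\) lies both above and below something in the chain \(i\preceq j\preceq k\)), and spelling out those few lines would make the argument airtight — though this is no worse than the case analysis the paper itself elides.
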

\begin{proof}
  If \(x\in \FN_{ij} \cap \FN_{jk}\), then \(x_{ij}=x_{jk}=0\), and \(\IT_{ijk}\)
  implies that \(x_{jk}\leq 0\).  This implies that \(x\) satisfies both
  \(\IN_{ik}\) and \(\IT_{ijk}\) exactly.

  In order to show that no other inequality is satisfied exactly, we construct
  an exponent matrix \(H=H(i,j,k)\) as follows. For distinct \(r,s\),
  \[
    H_{rs}=
    \begin{cases}
      0 & \; \text{if}\; rs= ij,jk, ik,                                                                   \\
      3 & \; \text{if}\; rs= ji,kj\quad \text{or}\quad r=i, s\neq j,k\quad \text{or}\quad s=k, r\neq i,j, \\
      4 & \; \text{if}\; rs= ki\quad \text{or}\quad r=j, s\neq i,k\quad \text{or}\quad s=j, r\neq i,k,    \\
      5 & \; \text{otherwise}.
    \end{cases}
  \]
  See \cref{fig:h123} for an illustration.
  \begin{figure}[htb]
    \centering
    \[
      \left(
        \begin{array}{ccccccc}
          0          & 0          & 0          & 3          & 3          & 3 & \rightarrow \\
          3          & 0          & 0          & 4          & 4          & 4 & \rightarrow \\
          4          & 3          & 0          & 5          & 5          & 5 & \rightarrow \\
          5          & 4          & 3          & 0          & 5          & 5 & \rightarrow \\
          5          & 4          & 3          & 5          & 0          & 5 & \rightarrow \\
          5          & 4          & 3          & 5          & 5          & 0 &             \\
          \downarrow & \downarrow & \downarrow & \downarrow & \downarrow &   &
        \end{array}
      \right)
    \]
    \caption{\(H(1,2,3)\). Arrows mean repeat the term in that direction.}
    \label{fig:h123}
  \end{figure}

  It is quite clear that the only nonnegativity inequalities satisfied by
  \(H\) are \(\IN_{ij}, \IN_{jk},\IN_{ik}\), and it also satisfies \(\IT_{ijk}\)
  exactly.  To see that \(H\in\haten\), as well as that it does not satisfy
  any other triangle inequality exactly can be done by case analysis.
  Separating the remaining \(\IT_{rst}\) according with \(r=i,r=j,r=k\),
  \(s=i,s=j,s=k\) (some of these cases are not mutually exclusive), and all	
  remaining cases, leads to an easy verification that \(H\) satisfies all these
  \(\IT_{rst}\), no one exactly.
\end{proof}

The fact below is proved in \cite{DDS} as a consequence of a method of lifting
facets from \haten to \(\hat\E_{n+1}\); that kind of obscures its
simplicity.
\begin{cor}\label{facet-ineq}
  All nonnegativity and triangle inequalities are facet defining for \haten.
\end{cor}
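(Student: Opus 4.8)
The plan is to apply the facet criterion from \cref{preliminaries}: as \haten is full-dimensional and \eqref{hatensys} is a clean \Hd of it (\cref{haten-as-cone}), an inequality of that system defines a facet precisely when some point of \haten satisfies it exactly and all the other inequalities strictly. I would therefore reduce the statement to producing, for every triangle inequality \(\IT_{ijk}\) and every nonnegativity inequality \(\IN_{ij}\), a witness point of this kind.

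The one ingredient I would recycle is the matrix \(H=H(i,j,k)\) constructed in the proof of \cref{faces-inter}, which lies in \haten and satisfies exactly the four inequalities \(\IN_{ij},\IN_{jk},\IN_{ik},\IT_{ijk}\), all others strictly. Each witness is then a small perturbation of \(H\) affecting only the entries \((i,j),(j,k),(i,k)\), tuned to keep the target inequality tight and to relax the other three. For the facet \(\FT_{ijk}\) I would put the values \(\epsilon,\epsilon,2\epsilon\) on those three entries: this preserves the equality in \(\IT_{ijk}\) (since \(\epsilon+\epsilon=2\epsilon\)) and turns \(\IN_{ij},\IN_{jk},\IN_{ik}\) strict. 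For the facet \(\FN_{ij}\) I would first pick any \(k\neq i,j\) (available because \(n\geq 3\)) and then use the values \(0,\epsilon,\epsilon/2\), which keeps \(x_{ij}=0\) while making \(\IN_{jk}\), \(\IN_{ik}\) and \(\IT_{ijk}\) strict (here \(0+\epsilon>\epsilon/2\)).

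It then remains to confirm that each perturbed point still belongs to \haten and satisfies only the intended inequality with equality. This is exactly where \cref{faces-inter} pays off: since every inequality other than the four listed is strict at \(H\), and the perturbation is linear and supported on at most three coordinates, a continuity argument shows that these inequalities stay strict for all sufficiently small \(\epsilon>0\); hence the perturbed point lies in the cone and is interior to the target facet. I expect this verification to be the only delicate step, its whole force coming from the fact that \cref{faces-inter} has already pinned the exact inequalities at \(H\) down to just four. Since the construction is uniform in \((i,j,k)\) --- and, in any event, \Sn acts transitively both on the triangle inequalities and on the nonnegativity inequalities --- exhibiting one witness of each type settles every inequality at once.
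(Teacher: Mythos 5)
Your proof is correct, but it follows a different route from the paper's official argument. The paper proceeds in two steps: it first uses \cref{faces-inter} qualitatively, concluding that the face induced by a nonnegativity inequality cannot be contained in any other face, so all the \(\FN_{ij}\) are facets; it then observes that \haten cannot be cut out by the nonnegativity inequalities alone, so at least one triangle inequality is facet-inducing, and the transitive action of \Sn on triangle inequalities promotes this to all of them. You instead verify the clean-\Hd facet criterion from \cref{preliminaries} directly for every single inequality, by perturbing the matrix \(H(i,j,k)\) from the proof of \cref{faces-inter} so that exactly one of its four tight inequalities stays tight while the other three become strict, continuity handling everything else; this is legitimate, since \haten is full-dimensional with a clean \Hd (\cref{haten-as-cone}), your two witnesses do what you claim, and the construction only needs \(n\geq 3\) to choose \(k\). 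In effect you have rediscovered the paper's own ``more direct proof,'' stated right after the corollary, which exhibits an explicit integer interior point of each \(\FN_{rs}\) and each \(\FT_{rst}\) verified by inspection; your perturbation-plus-continuity argument replaces that inspection, at the cost of a quantifier over small \(\varepsilon\), with the benefit of reusing the case analysis already done for \(H\). What the paper's official proof buys by comparison is economy --- no new point or verification at all, just a maximality argument plus symmetry --- while your version (like the paper's direct variant) has the advantage of producing an interior point of every facet explicitly and treating both families of inequalities uniformly.
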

\begin{proof}
  \cref{faces-inter} implies that no face induced by each nonnegativity
  inequality is contained in any other face, so they all induce facets.  But
  clearly \(\haten\neq \R^n\), so, at least one triangle inequality is
  facet-inducing.  As the group \Sn acts transitively on the set of triangular
  inequalities, they all induce facets as well.
\end{proof}

Here is a more direct proof:
\begin{quote}
  The matrix whose \(ij\)-entry is \(0\) if \(ij=rs\), \(2\) if \(i=s\) or
  \(j=r\) and \(1\) otherwise is an interior point of \(\FN_{rs}\).  The matrix
  whose \(ij\)-entry is \(1\) if \(ij=rs\) or \(st\) and is \(2\) otherwise is
  an interior point of \(\FT_{rst}\).  Each of these facts can be verified by
  inspection.
\end{quote}

The oriented cut quasi-semimetrics associated to sets of size \(1\) and \(n-1\)
play a very special role, already detected in \cite{DKKP}.  Denote
\(R^{(r)}=\delta(\{r\})\), \(C^{(r)}=\delta(\{1,\ldots,n\}\backslash\{r\}\).
In matrix form, \(R^{(r)}\) has row \(r\) with all ones off
diagonal, and is zero elsewhere; ditto for \(C^{(r)}\) and column \(r\), so that
\(C^{(r)}=\tau(R^{(r)})\).  We denote
\(\Ln=\conj{R^{(r)},C^{(r)}}{1\leq r\leq n}\) and refer to its members as
\emph{lines}.  As noted before, all rays in \Ln are extreme in \haten; an
alternative proof is in \cite[Lemma 3.2]{DKKP}.

Recall that \(s(A)\) is the size of the support of \(A\), that is the number
of nonzero entries in \(A\).
\begin{pro}\label{min-support}
  If \(0\neq A\in\haten\), then \(s(A)\geq n-1\).  If
  \(s(A)=n-1\), then it is a multiple of some line.
\end{pro}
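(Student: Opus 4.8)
The plan is to derive both statements directly from \cref{cut-semi}, whose proof already contains the essential containment of supports; the only extra ingredient is an elementary count. First I would record the support sizes of the cut quasi-semimetrics: for a proper nonempty $I\subseteq\{1,\dots,n\}$ we have $s(\delta(I))=|I|\,(n-|I|)$, since $\delta(I)_{ij}=1$ exactly for $i\in I,\ j\notin I$. The function $k\mapsto k(n-k)$ is a downward parabola on $[1,n-1]$, so over integers its minimum value $n-1$ is attained precisely when $k\in\{1,n-1\}$ (for $n\geq3$). In particular the lines $R^{(r)}=\delta(\{r\})$ and $C^{(r)}=\delta(\{1,\dots,n\}\setminus\{r\})$ are exactly the oriented cut quasi-semimetrics whose support has the minimal size $n-1$.

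Next I would prove the bound $s(A)\geq n-1$. Let $0\neq A\in\haten$. By \cref{cut-semi}(a) the minimal nonempty supports in \haten are those of cut quasi-semimetrics; concretely, the argument in its proof produces, from any pair $(r,s)$ with $A_{rs}>0$, a proper nonempty set $I=\conj{k}{A_{rk}=0}$ with $\supp(\delta(I))\subseteq\supp(A)$. Hence $s(A)\geq s(\delta(I))=|I|(n-|I|)\geq n-1$ by the minimization above, which gives the first assertion.

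For the second assertion, suppose $s(A)=n-1$. Then $|I|(n-|I|)=s(\delta(I))\leq s(A)=n-1$, and combined with the lower bound $|I|(n-|I|)\geq n-1$ this forces equality, so $|I|\in\{1,n-1\}$ and $\delta(I)$ is a line in $\Ln$. Equality of cardinalities together with $\supp(\delta(I))\subseteq\supp(A)$ yields $\supp(A)=\supp(\delta(I))$, whence \cref{cut-semi}(b) shows $A$ is a scalar multiple of $\delta(I)$, i.e.\ of a line. The argument is essentially bookkeeping once \cref{cut-semi} is in hand; the only place demanding a moment's care is the minimization of $k(n-k)$ and the observation that its minimum is realized exactly at the two endpoints, since that is precisely what distinguishes the lines from the other oriented cut quasi-semimetrics. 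I do not anticipate any serious obstacle.
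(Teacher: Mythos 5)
Your proof is correct and follows exactly the route the paper takes: the paper's own proof is the one-line remark that the proposition ``can be read directly from \cref{cut-semi}, as \(s(\delta(I))=|I|(n-|I|)\),'' and your argument is precisely the careful expansion of that remark --- the containment \(\supp(\delta(I))\subseteq\supp(A)\) from the proof of \cref{cut-semi}(a), the minimization of \(k(n-k)\) at \(k\in\{1,n-1\}\), and \cref{cut-semi}(b) for the equality case.
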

\begin{proof}
  This can be read directly from \cref{cut-semi}, as \(s(\delta(I))=|I|(n-|I|)\).
\end{proof}

Direct inspection shows that:

\begin{pro}\label{RC-eqs}
  Recall that when mentioning \(\IT_{ijk}\) and \(\IN_{ij}\) all indices are
  distinct.
  \begin{enumerate}
      \item \(R^{(r)}\) satisfies exactly only \conj{\IT_{ijk}}{j\neq r } and
    \conj{\IN_{ij}}{i\neq r}.
      \item \(C^{(s)}\) satisfies exactly only \conj{\IT_{ijk}}{j\neq s} and
    \conj{\IN_{ij}}{j\neq s}.
      \item \(\IN_{ij}\) is satisfied exactly on \Ln only by those \(R^{(r)}\)
    such that \(r\neq i\) and those \(C^{(s)}\) such that \(s\neq j\).
      \item \(\IT_{ijk}\) is satisfied exactly on \Ln only by those \(R^{(r)}\)
    such that \(r\neq j\) and those \(C^{(s)}\) such that \(s\neq j\).
  \end{enumerate}
\end{pro}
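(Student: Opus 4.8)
The plan is to observe first that parts (c) and (d) merely regroup the information in parts (a) and (b): part~(c) collects, for a fixed inequality $\IN_{ij}$, which lines of \Ln satisfy it exactly, and part~(d) does the same for $\IT_{ijk}$. Reading part~(a) off, $R^{(r)}$ satisfies $\IN_{ij}$ exactly precisely when $i\neq r$ and $\IT_{ijk}$ exactly precisely when $j\neq r$; reading part~(b) off gives the corresponding conditions for $C^{(s)}$. Hence it suffices to prove (a) and (b), and (c), (d) follow by transcription.

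For part~(a) I would first record the entries explicitly. Since the indices appearing in any defining inequality are pairwise distinct, the clause $j\neq r$ in the definition of $R^{(r)}$ is automatic, so $R^{(r)}_{ab}=1$ exactly when $a=r$ and $R^{(r)}_{ab}=0$ otherwise. The nonnegativity case is then immediate: $\IN_{ij}$ holds exactly, that is $R^{(r)}_{ij}=0$, if and only if $i\neq r$. For the triangle inequalities I would split into the cases, mutually exclusive by distinctness, according to whether $r\in\{i,j,k\}$ and, if so, which of the three indices it equals. Writing the condition for $\IT_{ijk}$ to hold exactly as $R^{(r)}_{ij}+R^{(r)}_{jk}=R^{(r)}_{ik}$, each case is a one-line evaluation: equality holds in every case except $j=r$, where the left side is $1$ and the right side $0$. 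This yields exactly the condition $j\neq r$.

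For part~(b), rather than repeating the computation I would exploit the transpose symmetry. Since $\tau$ is a system automorphism with $\tau^2=\mathrm{id}$ and $C^{(s)}=\tau(R^{(s)})$, the point $C^{(s)}$ satisfies a defining inequality exactly if and only if $R^{(s)}$ satisfies the transposed inequality exactly. Transposing coefficient matrices sends $\IT_{ijk}$ to $\IT_{kji}$ and $\IN_{ij}$ to $\IN_{ji}$; in both cases the index that is decisive in part~(a) --- the middle index of the triangle inequality and the row index of the nonnegativity inequality --- becomes $j$ after transposition. Applying part~(a) to the transposed inequalities therefore gives that $C^{(s)}$ satisfies $\IT_{ijk}$ (respectively $\IN_{ij}$) exactly if and only if $j\neq s$, which is precisely the claim of part~(b).

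The computation is genuinely elementary, so there is no serious obstacle; the only thing demanding care is the bookkeeping. The single observation that makes the whole argument short is that the distinctness of the indices collapses the two-clause definition of each line to a condition on one index, after which the triangle analysis reduces to four trivial cases. The transpose shortcut then halves the remaining work by deducing (b) from (a) instead of recomputing it.
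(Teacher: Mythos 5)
Your proof is correct and matches the paper's approach: the paper justifies this proposition with the single phrase ``direct inspection,'' and your write-up is precisely that inspection carried out carefully --- the entry formula for $R^{(r)}$, the four-case triangle check reducing to the condition $j\neq r$, the transpose symmetry $C^{(s)}=\tau(R^{(s)})$ to get (b), and the observation that (c) and (d) are regroupings of (a) and (b). Nothing is missing.
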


For each facet \(F\), let
\(e(F)=\conj{R\in\Ln}{R\text{ satisfies the \(F\) inequality exactly}}\).  If
we think of the members of \Ln as rays,
\(e(F)=\conj{R\in\Ln}{R\subseteq F}\).

\begin{pro}\label{nonneg-inj}
  If for some \(\FN_{ij}\), \(e(F)=e(\FN_{ij})\), then \(F=\FN_{ij}\).
\end{pro}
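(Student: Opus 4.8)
The plan is to read off the value $e(F)$ explicitly for every facet $F$ straight from \cref{RC-eqs}, and then compare. By \cref{facet-ineq} every facet of \haten is either a nonnegativity facet $\FN_{rs}$ or a triangle facet $\FT_{rst}$, so only these two families need to be examined. The decisive feature will be a simple asymmetry in which lines of \Ln are omitted.

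For the nonnegativity facets, \cref{RC-eqs}(c) gives $e(\FN_{rs})=\conj{R^{(a)}}{a\neq r}\cup\conj{C^{(b)}}{b\neq s}$; among the $2n$ lines this omits precisely $R^{(r)}$ and $C^{(s)}$. Hence if $F=\FN_{rs}$ and $e(F)=e(\FN_{ij})$, the omitted row-line and column-line must match, forcing $r=i$ and $s=j$, so $F=\FN_{ij}$. In other words, $e$ is already injective on the nonnegativity facets.

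It remains to rule out a triangle facet realising the same value. By \cref{RC-eqs}(d), writing $s$ for the middle index, $e(\FT_{rst})=\conj{R^{(a)}}{a\neq s}\cup\conj{C^{(b)}}{b\neq s}$, so $\FT_{rst}$ omits $R^{(s)}$ and $C^{(s)}$, whose indices \emph{agree}. Since $\FN_{ij}$ omits $R^{(i)}$ and $C^{(j)}$ with $i\neq j$, no triangle facet can share its $e$-value, and combining the two cases gives $F=\FN_{ij}$. The whole argument is bookkeeping once \cref{RC-eqs} is available, so there is no real obstacle; the only points needing care are invoking \cref{facet-ineq} to be sure the facet list is complete, and noticing that it is exactly the distinctness $i\neq j$ of the two omitted nonnegativity indices — against their coincidence in the triangle case — that blocks every collision.
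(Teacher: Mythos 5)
Your proof is correct and follows essentially the same route as the paper: both read off from \cref{RC-eqs} that \(e(\FN_{ij})=\Ln\backslash\{R^{(i)},C^{(j)}\}\) while \(e(\FT_{rst})=\Ln\backslash\{R^{(s)},C^{(s)}\}\), and conclude from the distinctness of the omitted indices in the first case versus their coincidence in the second. The paper's proof is just a terser statement of the same comparison; your explicit appeal to \cref{facet-ineq} for completeness of the facet list is a reasonable extra precaution.
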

\begin{proof}
  From \cref{RC-eqs}, \(e(\FN_{ij})=\Ln\backslash\{R^{(i)},C^{(j)}\}\), while
  \(e(\FT_{ijk})=\Ln\backslash\{R^{(j)},C^{(j)}\}\), and the result follows.
\end{proof}

We now state the main result in this section.

\begin{teo}\label{comb-auto}
  The combinatorial automorphism group of \haten consists precisely of those
  permutations of the face-lattice induced by \Sn, that is, the restriction of
  \(\ind\) to \Sn is an isomorphism.
\end{teo}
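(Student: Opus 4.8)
The plan is to prove that $\phi:=\ind|_{\Sn}\colon\Sn\to\Comb(\haten)$ is an isomorphism. Injectivity is already in hand: $\Sn$ consists of integral isometries, so $\Sn\subseteq\Symm(\haten)\cap\LinZ(\haten)$, and \cref{injmodcomb} makes $\ind$ injective there. For surjectivity I would not work with the whole face lattice but only with the action on the $2n$ lines \Ln, establishing three facts: (a) \Ln is invariant under $\Comb(\haten)$; (b) the action of $\Comb(\haten)$ on \Ln coincides with that of \Sn; and (c) this action is faithful. Granting these, any $g\in\Comb(\haten)$ agrees on \Ln with some $\phi(\sigma)$ by (b), so $g\,\phi(\sigma)^{-1}$ fixes \Ln pointwise and equals the identity by (c); thus $g=\phi(\sigma)$ and $\phi$ is onto.

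To get (c) I would argue as follows. Suppose $g\in\Comb(\haten)$ fixes every line. For each nonnegativity facet, $e(g\FN_{ij})=g(e(\FN_{ij}))=e(\FN_{ij})$, so \cref{nonneg-inj} forces $g\FN_{ij}=\FN_{ij}$; hence $g$ fixes every nonnegativity facet. Now for distinct $i,j,k$ the face $\FN_{ij}\cap\FN_{jk}$ is fixed by $g$, and by \cref{faces-inter} the only facets above it are $\FN_{ij},\FN_{jk},\FN_{ik},\FT_{ijk}$; since $g$ fixes the three nonnegativity facets among these, it must fix $\FT_{ijk}$ as well. Hence $g$ fixes all facets and is the identity.

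For (b), grant (a) so that $g\in\Comb(\haten)$ permutes \Ln and, for every facet $F$, $e(gF)=g(e(F))$. By \cref{RC-eqs} each facet omits exactly two lines, one of the form $R^{(a)}$ and one of the form $C^{(b)}$: $\FN_{ij}$ omits $\{R^{(i)},C^{(j)}\}$ with $i\neq j$, while $\FT_{ijk}$ omits $\{R^{(j)},C^{(j)}\}$. Thus the realized omitted pairs are exactly the pairs with one $R$ and one $C$, the edges of a $K_{n,n}$ on \Ln, and $g$ induces an automorphism of this graph. Moreover a ``diagonal'' pair $\{R^{(a)},C^{(a)}\}$ is omitted by the $(n-1)(n-2)\geq 2$ triangle facets with middle index $a$, whereas each ``off-diagonal'' pair is omitted by a single facet (the corresponding $\FN_{ij}$, unique by \cref{nonneg-inj}). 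Since $g$ preserves these multiplicities it preserves the perfect matching of diagonal pairs, so $g|_{\Ln}$ lies in the matching-preserving automorphism group of $K_{n,n}$, which is precisely the image of \Sn in $\mathrm{Sym}(\Ln)$, a copy of $S_n\times\mathbb{Z}_2$.

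The heart of the argument is (a), and I expect it to be the main obstacle. The idea is to characterize the lines intrinsically: by \cref{min-support} they are precisely the nonzero extreme rays of minimal support $n-1$, and the support of an extreme ray is exactly the set of nonnegativity facets avoiding it. Hence, once the family of nonnegativity facets is known to be $\Comb(\haten)$-invariant, the lines become the extreme rays lying on the maximum number of nonnegativity facets---a combinatorial description that, thanks to \cref{min-support}, needs no enumeration of the remaining (exponentially many) extreme rays. The real difficulty is therefore to separate nonnegativity facets from triangle facets purely combinatorially. The tool is the distinctive local pattern of \cref{faces-inter}: along the face $\FN_{ij}\cap\FN_{jk}$ exactly four facets meet, three of one type and one of the other. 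Turning this asymmetry into a global, automorphism-stable labeling of the two facet types---for instance by comparing, across all pairs of facets $F,F'$, how many facets contain $F\cap F'$---is where the main work lies, and is the step I expect to be the most delicate.
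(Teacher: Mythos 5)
Your skeleton (invariance of \Ln, identification of the action on \Ln with that of \Sn, faithfulness of that action) is exactly the paper's outline, and two of the three legs are sound. Your step (c) is correct and is essentially the paper's closing argument: \cref{nonneg-inj} pins down the nonnegativity facets, and \cref{faces-inter} (each face \(\FN_{ij}\cap\FN_{jk}\) lies on exactly four facets, three of them nonnegativity ones) then forces every triangular facet to be fixed. Your step (b) is also correct, though phrased differently from the paper: you classify facets by the pair of lines they \emph{omit} and count multiplicities (diagonal pairs omitted by \((n-1)(n-2)\) triangle facets, off-diagonal pairs by a single nonnegativity facet), whereas the paper's \cref{RC-part} counts, for each pair of lines, the number of facets \emph{containing} both, getting three distinct values. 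These are equivalent bookkeeping choices derived from \cref{RC-eqs}, and both yield that the restriction to \Ln lands in the matching-preserving automorphism group of \(K_{n,n}\), i.e.\ the image of \Sn.

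The genuine gap is step (a), which you yourself flag as the main obstacle and leave open. Your proposed route --- first show that the \emph{class} of nonnegativity facets is \(\Comb(\haten)\)-invariant, then characterize \Ln as the extreme rays lying on the most nonnegativity facets via \cref{min-support} --- puts the hardest question (combinatorially separating the two facet types) \emph{upstream} of everything else, and you give no argument for it; the local asymmetry of \cref{faces-inter} does not by itself produce an automorphism-stable labeling of facet types, since a priori an automorphism could map a configuration \((\FN_{ij},\FN_{jk},\FN_{ik},\FT_{ijk})\) onto a permuted configuration of the same four facets. The paper's \cref{LCinv} shows this separation is unnecessary: it counts the \emph{total} number of facets, of either type, containing each nonzero face. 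Each line lies on exactly \((n-1)^3\) facets (by \cref{RC-eqs}), while \cref{nstrict} (any \(A\in\haten\) strictly satisfies at least \((n-2)p(A)+s(A)\) defining inequalities) combined with the support analysis of \cref{cut-semi} shows every other nonzero face lies on strictly fewer than \((n-1)^3\) facets. Since ``number of incident facets'' is manifestly a combinatorial invariant, \Ln is invariant, with no need to distinguish facet types --- that distinction then falls out for free in your step (b). Without an argument of this kind, your proof does not close.
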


In view of \cref{injmodcomb}, we have

\begin{teo}\label{allequal}
  \(\Symm(\haten)=\LinZ(\haten)=\Sn\).
\end{teo}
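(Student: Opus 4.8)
The plan is to deduce both equalities from \cref{comb-auto} by a purely formal diagram chase along the monomorphisms displayed in \cref{eq:SCinter}, with no further geometric input. Recall from \cref{haten-as-cone} that \haten is a full-dimensional pointed rational cone, so \cref{injmodcomb} applies and tells us that the restrictions of the group homomorphism \(\ind\) to \(\Symm(\haten)\) and to \(\LinZ(\haten)\) are both injective. On the other hand, \cref{comb-auto} asserts that \(\ind\) carries \Sn isomorphically onto \(\Comb(\haten)\); in particular \(\ind(\Sn)=\Comb(\haten)\). The two equalities to be proved will then fall out of the elementary fact that a group homomorphism which is injective on a supergroup and surjective from a subgroup with the same image forces the two to coincide.

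First I would treat \(\Symm(\haten)\). Since \(\Sn\subseteq\Symm(\haten)\) (the system automorphisms are isometries) and \(\ind\) is a homomorphism, we obtain the chain \(\Comb(\haten)=\ind(\Sn)\subseteq\ind(\Symm(\haten))\subseteq\Comb(\haten)\), whence \(\ind(\Symm(\haten))=\ind(\Sn)\). Because \(\ind\) is injective on \(\Symm(\haten)\) and \Sn is a subgroup of \(\Symm(\haten)\) with exactly the same \(\ind\)-image, the two subgroups must coincide: \(\Symm(\haten)=\Sn\). Spelled out at the level of elements, given \(\varphi\in\Symm(\haten)\) one picks the unique \(\sigma\in\Sn\) with \(\ind(\sigma)=\ind(\varphi)\) afforded by \cref{comb-auto}, and injectivity of \(\ind\) on \(\Symm(\haten)\) then forces \(\varphi=\sigma\in\Sn\).

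The argument for \(\LinZ(\haten)\) is word-for-word the same, now using \(\Sn\subseteq\LinZ(\haten)\) (the system automorphisms are integral) together with the injectivity of \(\ind\) on \(\LinZ(\haten)\) from \cref{injmodcomb}; this yields \(\LinZ(\haten)=\Sn\) and completes the proof. I do not anticipate any genuine obstacle at this stage: the entire weight of the argument has already been absorbed into \cref{comb-auto} (the surjectivity of \(\Sn\to\Comb(\haten)\)) and into \cref{injmodcomb} (injectivity on the two relevant subgroups), and what remains is just the combination of an injection with a surjection onto a common image. The only bookkeeping point to keep straight is that \Sn must sit as a subgroup inside each of \(\Symm(\haten)\) and \(\LinZ(\haten)\), which is precisely the content of the inclusions recorded in \cref{eq:SCinter}.
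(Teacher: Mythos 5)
Your proof is correct and takes essentially the same route as the paper: the paper deduces \cref{allequal} immediately from \cref{injmodcomb} and \cref{comb-auto}, noting (as remarked below \cref{eq:SCinter}) that surjectivity of \(\Sn\rightarrow\Comb(\haten)\) combined with injectivity of \(\ind\) on \(\Symm(\haten)\) and \(\LinZ(\haten)\) forces all inclusions in the diagram to be equalities. Your write-up merely spells out this same diagram chase at the level of elements.
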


Some of these results are proved in \cite{DDS} for small values of \(n\) and
conjectured to hold in general.

\begin{proof}[Proof of \cref{comb-auto}]
  By (\cref{eq:SCinter}) and \cref{injmodcomb}, it is enough to show that
  \(\ind(\Sn)=\Comb(\haten)\).  The set \F of facets is invariant under
  \(\Comb(\haten)\); since every face of \haten is a meet of facets, it is
  enough to show that for every \(\varphi\in\Comb(\haten)\) there exists a
  \(\psi\in\Sn\) whose action on \F coincides with that of \(\varphi\).

  Some Lemmas below pave the way to \cref{RC-ind}, which shows \Ln is
  invariant under \(\Comb(\haten)\), and each combinatorial automorphism acts
  on \Ln in the same way as some automorphism induced from \Sn.

  To finish the proof, let \(\varphi\in\Comb(\haten)\), and \(\psi\) be given
  by \cref{RC-ind}; then
  \(\gamma=(\ind\psi)^{-1}\varphi\in\Comb(\haten)\) is the identity on \Ln.

  As \(\gamma\) is a combinatorial automorphism, it commutes with
  \(e\). Hence, given a nonnegativity facet \(N_{ij}\),
  \(e(\gamma(N_{ij}))=\conj{\gamma(R)}{R\in e(N_{ij})}=e(N_{ij})\), and it
  follows from \cref{nonneg-inj} that \(\gamma(N_{ij})=N_{ij}\); 
  \cref{faces-inter} implies that \(\gamma\) fixes the triangular facets as
  well. Hence \(\gamma\) is the identity, and \(\varphi=\ind\psi\).
\end{proof}

Those were the facts used in the proof of  \cref{comb-auto}:
\begin{lem}\label{nstrict}
  Any \(A\in\haten\) strictly satisfies at least \((n-2)p(A)+s(A)\) defining
  inequalities, where \(p(A)\) is the number of pairs \(\{i,j\}\) such that at
  least one of \(a_{ij},a_{ji}\) is positive.
\end{lem}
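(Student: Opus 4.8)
The plan is to split the defining inequalities into the nonnegativity ones and the triangle ones, and account for the two summands $s(A)$ and $(n-2)p(A)$ separately. The nonnegativity half is immediate: the inequality $\IN_{ij}$ reads $a_{ij}\geq 0$, so $A$ satisfies it strictly exactly when $(i,j)\in\supp(A)$. Hence $A$ strictly satisfies precisely $s(A)$ nonnegativity inequalities, and everything reduces to producing at least $(n-2)p(A)$ strictly satisfied triangle inequalities.

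For the triangle part I would pass to a local, per-triple count. Every triangle inequality $\IT_{ijk}$ lives on the $3$-element set $\{i,j,k\}$, and conversely each $3$-subset $T$ carries exactly the six triangle inequalities on its elements; so the total number of strictly satisfied triangle inequalities equals $\sum_T(\text{number strict on }T)$, summed over all $3$-subsets $T$. On the other side, calling a pair \emph{active} when $a_{ij}>0$ or $a_{ji}>0$ (so $p(A)$ counts active pairs), each active pair lies in exactly $n-2$ triples, whence $(n-2)p(A)=\sum_T(\text{number of active pairs inside }T)$ by double counting. Thus it suffices to prove the local claim: for every $3$-subset $T$, the number of triangle inequalities on $T$ satisfied strictly is at least the number of active pairs contained in $T$.

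To prove the local claim I would fix $T=\{1,2,3\}$ and write the six triangle inequalities as linear forms in the six entries $a_{12},a_{21},a_{13},a_{31},a_{23},a_{32}$. The key observation is that certain pairs of these forms are \emph{opposite}: their sum cancels all but the two entries of a single pair, so if both members of such a pair are satisfied exactly then those two entries vanish, i.e. the corresponding pair is inactive. Recording these conflicts produces a graph on the six inequalities, and computing it reveals that it is a single $6$-cycle $C_6$, with each of the three pairs guarded by two opposite edges. Since $A\in\haten$, for an \emph{active} pair its two guarding edges cannot both have tight endpoints, so the set of inequalities satisfied exactly is an independent set avoiding those edges. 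When all three pairs are active the tight set is independent in the whole $C_6$, whose independence number is $3$, yielding at least three strict inequalities; when exactly two pairs are active the tight set must be independent after deleting only the two edges guarding the inactive pair, leaving two disjoint paths on three vertices each (independence number $2+2=4$), so at least two inequalities are strict; a single active pair cannot occur, since a lone positive entry $a_{ij}$ forces $0\geq a_{ij}$ through $\IT_{ikj}$ with third vertex $k$; and no active pair requires nothing. In every case the number strict is at least the number of active pairs in $T$.

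The hard part will be the local claim, and specifically pinning down the conflict structure: the bookkeeping of exactly which of the six inequalities pairwise force a pair to be inactive is where the argument has teeth, and recognizing that this conflict graph is precisely $C_6$ — rather than grinding a brute-force split over all patterns of tightness — is what makes the count clean. Once the $6$-cycle is identified, the independence-number bounds dispatch each case mechanically, and the global double counting of the previous paragraph assembles them, together with the $s(A)$ nonnegativity inequalities, into the stated bound.
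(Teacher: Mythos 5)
Your proof is correct, and the conflict structure you describe checks out: the six triangle inequalities on a triple really do form a hexagon under the ``sum cancels to $a_{ij}+a_{ji}$'' relation, with the three index pairs guarding antipodal edges, and your case analysis over the number of active pairs is sound. But it is worth comparing with the paper's proof, which runs on the same engine with far less machinery. The shared key fact is that $\IT_{ijk}+\IT_{jik}$ collapses to $a_{ij}+a_{ji}$, so both cannot be tight when the pair $\{i,j\}$ is active. The paper stops there: for each active pair $\{i,j\}$ and each of the $n-2$ indices $k$, at least one of $\IT_{ijk},\IT_{jik}$ is strict; and since an inequality $\IT_{abc}$ determines the unordered pair $\{a,b\}$ and the third index $c$, these two-element sets are pairwise disjoint over all (active pair, $k$) combinations, so the $(n-2)p(A)$ strict triangle inequalities are counted globally, with no per-triple decomposition and no case analysis. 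In your picture, this amounts to using only the perfect matching of three alternating edges of your $C_6$ and discarding the other three edges; that suffices because independence numbers never need to enter --- ``at least one strict endpoint per matched edge of an active pair'' is already exactly the count the bound demands, per pair and per third vertex. Your fuller analysis (independence number of $C_6$, deletion of the inactive pair's edges, the observation that a lone active pair inside a triple is impossible) yields identical numbers in every case --- the independence number of $C_6$ equals that of its perfect matching --- so the extra structure buys nothing for this bound; what it costs is the double-counting layer over triples and the case split, both of which the matching viewpoint renders unnecessary.
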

\begin{proof}
  If \(A\) satisfies both \(\IT_{ijk}\) and \(\IT_{jik}\) exactly, then
  \(a_{ij}+a_{ji}=0\), hence these entries are \(0\). So, if at least one of
  \(a_{ij},a_{ji}\) is positive, at least one of \(\IT_{ijk}\) and \(\IT_{jik}\)
  is strict for \(A\), for each \(k\).  That gives \((n-2)p(A)\) strict inequalities.
  The  number of strict nonnegativity inequalities is \(s(A)\).
\end{proof}

\begin{lem}\label{LCinv}
  The set \Ln is invariant under any combinatorial automorphism of \haten.
\end{lem}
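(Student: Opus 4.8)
The plan is to exhibit a purely order-theoretic invariant of an extreme ray that singles out the lines, namely the number of facets containing it, and to show that this number is maximized exactly on \Ln. Since a combinatorial automorphism is an automorphism of the (graded) face-lattice, it carries atoms to atoms and coatoms to coatoms while preserving incidence; hence the number of facets lying above a given extreme ray is invariant under \(\Comb(\haten)\). Once the lines are characterized as the extreme rays lying on the largest number of facets, their invariance is immediate: \(\varphi(\Ln)\subseteq\Ln\) for every such \(\varphi\), and this forces equality on the finite set \Ln.

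First I would translate ``number of facets containing a ray'' into inequality bookkeeping. By \cref{facet-ineq} the facets of \haten are in bijection with the defining inequalities, and a ray lies on the facet induced by an inequality precisely when it satisfies that inequality with equality. Writing \(T=n(n-1)^2\) for the total number of defining inequalities, the count of facets through (the ray of) a nonzero \(A\) equals \(T\) minus the number of inequalities that \(A\) satisfies strictly. By \cref{nstrict} the latter is at least \((n-2)p(A)+s(A)\), so \(A\) lies on at most \(T-(n-2)p(A)-s(A)\) facets.

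Next I would bound \(p(A)\) and \(s(A)\) from below. \cref{min-support} gives \(s(A)\geq n-1\). For \(p(A)\) I would invoke the proof of \cref{cut-semi}(a): every nonzero \(A\in\haten\) satisfies \(\supp(A)\supseteq\supp(\delta(I))\) for some proper nonempty \(I\), and the undirected support graph of \(\delta(I)\) is the complete bipartite graph on the partition \((I,I^{c})\), which is connected and spanning. Hence the support graph of \(A\) is connected, so it has at least \(n-1\) edges, i.e.\ \(p(A)\geq n-1\). Combining, \((n-2)p(A)+s(A)\geq(n-2)(n-1)+(n-1)=(n-1)^2\), so every extreme ray lies on at most \((n-1)^3=T-(n-1)^2\) facets. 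For the lines this upper bound is attained: reading off \cref{RC-eqs}, \(R^{(r)}\) and \(C^{(r)}\) each satisfy exactly \((n-1)^2(n-2)\) triangle and \((n-1)^2\) nonnegativity inequalities, for a total of \((n-1)^3\).

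Finally I would show the maximum is attained only on lines. If \(A\) is nonzero and not a multiple of a line, then by \cref{min-support} its support cannot have size \(n-1\), so \(s(A)\geq n\); with \(p(A)\geq n-1\) this yields \((n-2)p(A)+s(A)\geq(n-2)(n-1)+n=(n-1)^2+1\), whence \(A\) lies on at most \((n-1)^3-1\) facets. Thus the lines are exactly the extreme rays contained in the maximal number \((n-1)^3\) of facets, a property preserved by every \(\varphi\in\Comb(\haten)\), giving \(\varphi(\Ln)=\Ln\). The one delicate point I anticipate is the bound \(p(A)\geq n-1\): the estimate of \cref{nstrict} is useless until \(p(A)\) is controlled, and \(p\) is not \emph{a priori} tied to the support size. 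The connectivity argument routed through \cref{cut-semi}(a) is what simultaneously secures \(p(A)\geq n-1\) for every nonzero member of the cone and opens the gap \((n-1)^2+1\) for non-lines, so it is the crux of the whole argument.
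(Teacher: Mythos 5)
Your proposal is correct and follows essentially the same route as the paper's proof: both single out the lines as the nonzero faces contained in the maximal number \((n-1)^3\) of facets --- a face-lattice invariant --- using \cref{RC-eqs} for the exact count on lines and \cref{nstrict} together with \cref{min-support} for the gap. The one place you diverge is the lower bound for non-lines: the paper splits, via \cref{cut-semi}, into the case of a cut \(\delta(I)\) with \(2\leq|I|\leq n-2\) inside the support and the case of a support containing the supports of two distinct lines, estimating each case separately; you instead prove the uniform bound \(p(A)\geq n-1\) by observing that the undirected support graph of any nonzero \(A\in\haten\) contains a spanning complete bipartite graph (extracted from the proof of \cref{cut-semi}) and is therefore connected, and then combine this with \(s(A)\geq n\), which \cref{min-support} gives for anything that is not a multiple of a line. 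This unified estimate is cleaner and avoids the case analysis; the trade-off is that your argument, as stated, bounds only extreme rays rather than all nonzero faces, which is nevertheless sufficient for the lemma, since an automorphism of the face lattice carries atoms to atoms and coatoms to coatoms and hence preserves the number of facets above each extreme ray.
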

\begin{proof}
  \cref{RC-eqs} implies that the ray spanned by each \(R^{(r)}\) and \(C^{(s)}\)
  is contained in \((n-1)^2(n-2)\) triangular facets and \((n-1)^2\) nonnegativity
  ones, so it is contained in precisely \((n-1)^3\) facets.  We will show that any
  other nonzero face is contained in fewer facets.  This immediately implies the
  result.

  Let \(F\) be a nonzero face of \haten and let \(A\) be an interior point of
  \(F\).  As there exist \(n(n-1)(n-2)\) triangular facets and \(n(n-1)\)
  nonnegativity facets, for a total of \(n(n-1)^2\), we want to show that if
  \(F\notin\Ln\) then \(A\) strictly satisfies more than
  \((n-1)^2=n(n-1)^2-(n-1)^3\) inequalities.

  By \cref{cut-semi}, we need to consider only two cases
  \begin{enumerate}
      \item There exists a subset \(I\) of \NN such that \(2\leq |I|\leq n-2\)
    and for every \(i\in I, j\notin I\), \(a_{ij}>0\). Referring to
    \cref{nstrict}, both \(p(A)\) and \(s(A)\) are at least \(|I|(n-|I|)\);
    so \(A\) strictly satisfies at least \((n-1)|I|(n-|I|)\) inequalities.  As
    \(2\leq |I|\leq n-2\), \(|I|(n-|I|)>n-1\), and the result follows.
      \item There exist two members of \Ln such that the support of \(A\)
    contains the union of their supports, hence \(s(A)\geq 2n-3\).  Trivially,
    \(p(A)\geq n-1\), so \(A\) strictly satisfies at least
    \((n-2)(n-1)+2n-3>(n-1)^2\) inequalities.
  \end{enumerate}
\end{proof}

Denote \(\mathscr{R}=\conj{R^{(i)}}{i=1,\ldots,n},\, \mathscr{C}=\conj{C^{(i)}}{i=1,\ldots,n}\).
\begin{lem}\label{RC-part}
  The partitions \((\mathscr{R},\mathscr{C})\) and
  \(\left(\left\{R^{(i)},C^{(i)}\right\}_{i=1,\dots,n}\right)\) of \Ln are preserved by
  any combinatorial automorphism of \haten.
\end{lem}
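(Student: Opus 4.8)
The plan is to attach to each unordered pair of distinct lines a \emph{single combinatorial invariant} — the number of facets of \haten{} containing both — and to read off both partitions from its values. By \cref{LCinv} any \(\varphi\in\Comb(\haten)\) permutes \Ln, and as a face-lattice automorphism it permutes the facets (the coatoms) while preserving incidence; hence, writing \(c(L,L')\) for the number of facets containing both \(L\) and \(L'\), we have \(c(\varphi(L),\varphi(L'))=c(L,L')\) for all \(L,L'\in\Ln\). So it suffices to compute \(c\) on the four kinds of pairs and check that its values single out the two partitions.

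First I would compute \(c\) from \cref{RC-eqs}. The facets through a line are its exactly-satisfied inequalities, and the triangular facets through either \(R^{(r)}\) or \(C^{(r)}\) are exactly the \(\FT_{ijk}\) with \(j\neq r\), so they depend only on the common index \(r\). A short count of common triangular and common nonnegativity facets then gives, writing \(v=(n-1)^2(n-2)\) and taking distinct indices,
\[
  c(R^{(r)},R^{(r')})=c(C^{(s)},C^{(s')})=v,\qquad
  c(R^{(r)},C^{(s)})=
  \begin{cases}
    n(n-1)(n-2)=v+(n-1)(n-2), & r=s,\\
    v+1, & r\neq s.
  \end{cases}
\]
Since \(n\geq 3\) forces \((n-1)(n-2)\geq 2>1\), these values obey \(v<v+1<v+(n-1)(n-2)\), so they are pairwise distinct apart from the tie \(RR=CC\).

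From this both partitions fall out. The unique maximal value of \(c\) is \(n(n-1)(n-2)\), attained exactly on the pairs \(\{R^{(r)},C^{(r)}\}\); as each line lies in exactly one such pair, these maximal pairs form a perfect matching that \(\varphi\) must preserve, which is precisely the partition \(\bigl(\{R^{(i)},C^{(i)}\}\bigr)_{i=1,\dots,n}\). For the partition \((\mathscr{R},\mathscr{C})\), I would consider the graph \(G\) on \Ln whose edges are the pairs with \(c=v\); by the computation its edges are exactly the within-\(\mathscr{R}\) and within-\(\mathscr{C}\) pairs, so \(G\) is a disjoint union of two copies of \(K_n\) with vertex sets \(\mathscr{R}\) and \(\mathscr{C}\). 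Because \(\varphi\) preserves \(c\) it is an automorphism of \(G\), hence permutes its two connected components, and therefore preserves \((\mathscr{R},\mathscr{C})\), either fixing it or swapping the two classes.

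The only genuine work is the counting step, and the point to watch is that the maximum of \(c\) be attained strictly — i.e.\ that \((n-1)(n-2)>1\) — and that the mixed pairs with distinct indices land on \(v+1\) rather than \(v\), so that they do not accidentally join the two components of \(G\). Both hold for every \(n\geq 3\), so beyond this bookkeeping no real obstacle remains.
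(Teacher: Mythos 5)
Your proposal is correct and follows essentially the same route as the paper: both use the combinatorial invariant \(c(A,B)\) (number of facets containing both lines), compute the same three values \((n-1)^2(n-2)\), \((n-1)^2(n-2)+1\), and \(n(n-1)(n-2)\) via \cref{RC-eqs}, and conclude via an edge-colored graph on \Ln preserved by any combinatorial automorphism. Your write-up merely makes explicit the final step (the maximal-value pairs forming a preserved perfect matching, and the color-\(v\) graph splitting into two components \(\mathscr{R}\), \(\mathscr{C}\)) which the paper leaves implicit.
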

\begin{proof}
  We denote by \(c(A,B)\) the number of facets containing lines \(A,B\); this is
  clearly a combinatorial invariant.  Let us compute these numbers
  using \cref{RC-eqs} (a) and (b).  There are three cases to consider:
  \begin{enumerate}
      \item \(A=R^{(r)},B=R^{(s)}\) or \(A=C^{(r)},B=C^{(s)}\), \(r\neq s\).
    We count the \(\IT_{ijk}\) with \(j\neq r,s\) and \(\IN_{ij}\) with
    \(i\neq r,s\) (case \(R\)) or \(j\neq r,s\) (case \(C\)). Hence\\
    \(c(A,B)=(n-1)(n-2)^2+(n-1)(n-2)=(n-1)^2(n-2)\).
      \item \(A=R^{(r)},B=C^{(s)}\), \(r\neq s\). We count the \(\IT_{ijk}\)
    with \(j\neq r,s\) and \(\IN_{ij}\) with \(i\neq r\) and \(j\neq s\).  Hence
    \(c(A,B)=(n-1)(n-2)^2+(n-1)(n-2)+1=(n-1)^2(n-2)+1\).
      \item \(A=R^{(r)},B=C^{(r)}\). We count the \(\IT_{ijk}\) with \(j\neq r\) and \(\IN_{ij}\) with
    \(i\neq r\) and \(j\neq r\). Hence
    \(c(A,B)=(n-1)^2(n-2)+(n-1)(n-2)=n(n-1)(n-2)\).
  \end{enumerate}
  Consider now the complete graph with vertex set \Ln and edges colored by the
  values of \(c(A,B)\) just computed.
  By \cref{LCinv}, any combinatorial automorphism of \haten permutes the
  vertices of the graph, and, since the colors are
  combinatorial invariants, such permutation is an automorphism of the
  colored graph.
\end{proof}

\begin{lem}\label{RC-ind}
  If \(\varphi\) is a combinatorial automorphism of \haten, then there exists
  \(\psi\in\Sn\) such that for every \(R\in\Ln\),
  \(\varphi(R)=(\ind\psi)(R)\).
\end{lem}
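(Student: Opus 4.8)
The plan is to combine \cref{LCinv} and \cref{RC-part} to pin down the action of $\varphi$ on \Ln completely, and then exhibit an element of \Sn with the same action. First I would invoke \cref{LCinv}: since $\varphi$ is a combinatorial automorphism, it restricts to a permutation of the set \Ln. By \cref{RC-part}, this permutation preserves both the partition $(\mathscr{R},\mathscr{C})$ and the pairing partition $\bigl(\{R^{(i)},C^{(i)}\}\bigr)_{i=1,\dots,n}$. The remaining argument is purely combinatorial: I will classify the permutations of \Ln preserving both partitions and match each with an element of $\ind(\Sn)$.

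Recall how \Sn acts on \Ln. A coordinate permutation acts (via $\ppi$) by permuting the $R^{(r)}$ among themselves and the $C^{(r)}$ among themselves according to one and the same permutation of the index set, while $\tau$ interchanges $R^{(r)}\leftrightarrow C^{(r)}$ for every $r$ (since $C^{(r)}=\tau(R^{(r)})$). In particular every element of $\ind(\Sn)$ preserves both partitions, so what must be shown is that no other permutation of \Ln does.

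Given $\varphi$, the partition $(\mathscr{R},\mathscr{C})$ is either fixed blockwise or has its two blocks interchanged; composing with $\ind\tau$ if necessary, I may assume $\varphi(\mathscr{R})=\mathscr{R}$ and $\varphi(\mathscr{C})=\mathscr{C}$. Then $\varphi$ restricts to a permutation of $\mathscr{R}=\{R^{(1)},\dots,R^{(n)}\}$, determining a $\pi\in S_n$ via $\varphi(R^{(i)})=R^{(\pi(i))}$. Since the pairing partition is preserved, $\varphi$ must send the block $\{R^{(i)},C^{(i)}\}$ to a block whose $R$-part is $R^{(\pi(i))}$, namely $\{R^{(\pi(i))},C^{(\pi(i))}\}$; hence $\varphi(C^{(i)})=C^{(\pi(i))}$ as well. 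Thus $\varphi$ coincides on \Ln with $\ind(\ppi)$, and undoing the possible composition with $\ind\tau$ yields the required $\psi\in\Sn$.

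The entire content lies in the last paragraph, where the pairing partition couples the action on $\mathscr{R}$ to that on $\mathscr{C}$ and forces a single index permutation to govern both; this is precisely the rigidity that rules out any ``mismatched'' symmetry of \Ln. I anticipate no genuine obstacle, since the real work was already carried out in proving \cref{LCinv} and \cref{RC-part} — this lemma is the short combinatorial conclusion that feeds directly into the proof of \cref{comb-auto}.
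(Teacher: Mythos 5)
Your proposal is correct and follows essentially the same route as the paper's proof: invoke \cref{LCinv} to get a permutation of \Ln, use \cref{RC-part} to reduce (after possibly composing with \(\ind\tau\)) to the case \(\varphi(\mathscr{R})=\mathscr{R}\), extract \(\pi\in S_n\) from the action on \(\mathscr{R}\), and let the pairing partition force \(\varphi(C^{(i)})=C^{(\pi(i))}\). The paper phrases the reduction as a case split on whether \(\varphi(R^{(1)})\) is an \(R^{(j)}\) or a \(C^{(j)}\), but in its second case it composes with \(\ind\tau\) exactly as you do, so the arguments are the same.
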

\begin{proof}
  Since \Ln is invariant,
  \(\varphi\left(R^{(1)}\right)=R^{(j)}\text{ or }C^{(j)}\), for some \(j\).
  Consider first the case \(\varphi\left(R^{(1)}\right)=R^{(j)}\).  It follows
  from \cref{RC-part}, \(\varphi(\mathscr{R})=\mathscr{R}\), hence there
  exists \(\pi\in S_n\) such that for every \(i\),
  \(\varphi\left(R^{(i)}\right)=R^{(\pi(i))}\).  Also, for every \(i\),
  \(\varphi\left(\left\{R^{(i)},C^{(i)}\right\}\right)=\left\{R^{(\pi(i))},\varphi\left(C^{(i)}\right)\right\}\),
  and again by \cref{RC-part}, we have that
  \(\varphi\left(C^{(i)}\right)=C^{(\pi(i))}\), and the result follows, with
  \(\psi=\ppi\).  In the case \(\varphi\left(R^{(1)}\right)=C^{(j)}\), we
  argue as above for \((\ind\tau)\circ\varphi\), and conclude the result with
  \(\psi=\tau\ppi\).
\end{proof}

\section{Max closed cones}\label{sec:max-cones}

Here we have a glimpse on cones that are also \op-monoids,
i.e. \emph{\maxclosed cones}, restricted to cones contained in
\(\Q_+^N\).  As we have two additive monoids on \(\Q_+^N\), we label
by the \op symbol all notions like submonoid and homomorphism to make
it clear which structure we are referring to. While the restriction to
rational instead of real cones appears out of the blue here,
\cref{rat-cones} gives a rationale for that.

\begin{pro}\label{prop-4.:1}
  A rational cone \CC  is a \op-submonoid of \(\Q_+^N\)
  if and only if its integer cone \(\CC_\Z =\CC\cap\ZN\) is a
  \op-submonoid of \NN.
\end{pro}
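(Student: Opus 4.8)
The plan is to prove the two implications separately; the forward direction is immediate, while the reverse rests entirely on the homogeneity of \(\op\) under positive scaling, so rationality will do all the real work.

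First I would dispatch the easy direction. Suppose \CC is a \op-submonoid of \(\Q_+^N\). Given \(u,v\in\CC_\Z=\CC\cap\ZN\), closure of \CC gives \(u\op v\in\CC\); since the componentwise maximum of two integer vectors is again an integer vector, \(u\op v\in\ZN\), and therefore \(u\op v\in\CC_\Z\). Together with \(0\in\CC_\Z\) (a cone always contains the origin, which is the \op-identity on \NN), this shows \(\CC_\Z\) is a \op-submonoid of \NN.

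For the converse, assume \(\CC_\Z\) is \op-closed and take arbitrary \(u,v\in\CC\). Because \CC is a rational cone, \(u\) and \(v\) have rational coordinates, so a single positive integer \(m\) clears all their denominators, giving \(mu,mv\in\ZN\). A cone is closed under multiplication by positive scalars, so \(mu,mv\in\CC\), whence \(mu,mv\in\CC_\Z\). The crucial observation is that \(\op\) commutes with multiplication by a nonnegative scalar, \(\max(ma,mb)=m\max(a,b)\), so that \(mu\op mv=m(u\op v)\). By hypothesis \(m(u\op v)=mu\op mv\in\CC_\Z\subseteq\CC\), and applying closure of the cone under scaling by the positive rational \(1/m\) yields \(u\op v\in\CC\). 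This establishes \op-closure of \CC, and as before \(0\in\CC\) supplies the identity.

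The argument is short precisely because rationality lets us pass from an arbitrary pair of cone points to integer points by one uniform scaling, after which the homogeneity identity \(\max(ma,mb)=m\max(a,b)\) transports the maximum back and forth across that scaling. The only delicate point is the use of rationality rather than mere realness, so that a common denominator \(m\) exists: this is exactly where the standing restriction of the section, that cones lie in \(\Q_+^N\), is needed, and it is the hypothesis whose motivation \cref{rat-cones} is meant to address.
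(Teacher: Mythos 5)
Your proof is correct and follows essentially the same route as the paper's: the forward direction is immediate, and the converse clears denominators with a single positive integer scaling and uses the homogeneity \(r(u\op v)=ru\op rv\) to transfer closure from \(\CC_\Z\) back to \(\CC\), exactly as in the paper. Your closing remark rightly locates the role of the standing assumption \(\CC\subseteq\Q_+^N\) (rather than rationality of the description alone) in guaranteeing that a common denominator exists.
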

\begin{proof}
  Only the  `if' part requires a proof. Let
  \(u,v\in \CC\).  Choose a positive integer \(r\) such that \(ru,rv\in\NN\).
  Clearly those vectors are in \CC, hence so is \(ru\oplus rv=r(u\oplus
  v)\). The last equality shows that \(u\oplus v\in\CC\), as required.
\end{proof}

\begin{pro}\label{mc-hyp}
  Let \(H\) be a half-space given by a linear inequality \(ax\geq 0\).
  Then,
  \begin{enumerate}
      \item If \(a\) has at most one negative component, \(H\) is \maxclosed.
      \item If \(a\) has at least two negative components, then any small
    neighborhood in the bounding hyperplane of \(H\) contains points whose
    \(\max\) is outside \(H\).
  \end{enumerate}
\end{pro}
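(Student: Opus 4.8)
The plan is to handle the two parts by separate and essentially elementary arguments: part (a) reduces to a monotonicity comparison, while part (b) calls for an explicit pair of perturbations living on the bounding hyperplane.

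For part (a), write $a=(a_i)_{i\in N}$ and let $k$ be the unique index (if one exists) with $a_k<0$; every other coefficient is nonnegative. Given $u,v\in H$, I would first note that, after possibly swapping $u$ and $v$, we may assume $u_k\geq v_k$, so that $(u\oplus v)_k=u_k$. Then $u\oplus v$ and $u$ agree in coordinate $k$, while $(u\oplus v)_i=\max(u_i,v_i)\geq u_i$ for every $i\neq k$, and there $a_i\geq 0$. Multiplying coordinatewise and summing gives $a(u\oplus v)\geq au\geq 0$, so $u\oplus v\in H$; hence $H$ is \maxclosed. (If $a$ has no negative coordinate at all, the same comparison works against either point.) This disposes of part (a).

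For part (b), fix two distinct indices $j,k$ with $a_j<0$ and $a_k<0$, and let $p$ be any point of the bounding hyperplane, i.e.\ $ap=0$. The idea is to build two points $u=p+s$ and $v=p+t$, both on that hyperplane and arbitrarily close to $p$, whose componentwise maximum leaves $H$. Since $(u\oplus v)_i=p_i+\max(s_i,t_i)$ and $ap=0$, we have $a(u\oplus v)=\sum_i a_i\max(s_i,t_i)$, so it suffices to choose small perturbations $s,t$ with $as=at=0$ and $\sum_i a_i\max(s_i,t_i)<0$. For a parameter $\lambda>0$ I would take $s,t$ supported on $\{j,k\}$ by
\[
  s_j=-a_k\lambda,\quad s_k=a_j\lambda,\qquad t_j=a_k\lambda,\quad t_k=-a_j\lambda .
\]
A one-line check gives $as=at=0$, so $u,v$ lie on the hyperplane. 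Because $a_j,a_k<0$, the maximum in coordinate $j$ is $-a_k\lambda$ and in coordinate $k$ it is $-a_j\lambda$, whence $\sum_i a_i\max(s_i,t_i)=-2a_ja_k\lambda<0$, using that the product $a_ja_k$ of two negatives is positive. Thus $u\oplus v\notin H$, and since $\|s\|,\|t\|\to 0$ as $\lambda\to 0$, such a pair $u,v$ can be found inside any neighborhood of $p$ within the hyperplane, which is exactly the assertion.

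I do not expect a real obstacle here. The only step requiring attention is the sign bookkeeping in part (b): one must arrange the two perturbations so that each of the two negative coordinates contributes a strictly negative term to $\sum_i a_i\max(s_i,t_i)$ while both points stay on the hyperplane. The symmetric choice above makes the two contributions equal, which keeps the verification transparent and avoids any case analysis.
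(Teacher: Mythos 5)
Your proof is correct and follows essentially the same route as the paper: part (a) is the same monotonicity comparison against the point achieving the maximum in the negative coordinate, and your perturbations $s,t=\pm\lambda z$ with $z_j=-a_k$, $z_k=a_j$ are exactly the paper's symmetric pair $u\pm\varepsilon z$ on the bounding hyperplane, yielding the same deficit $-2a_ja_k\lambda<0$. No gaps.
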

\begin{proof}
  If \(a\) is nonnegative, it is clear that \(H\) is \maxclosed. Suppose it
  has a single negative component; we can rewrite the inequality as
  \(cx-bx_i\geq 0\), where \(c\geq 0, c_i=0, b>0\). Let \(u,v\in H\); Without
  loss of generality, \(\max(u_i,v_i)=v_i\).  Then,
  \(c(u\op v)\geq cv \geq bv_i = b(u\op v)_i\), showing that \(u\op v\in H\).
  This shows part (a).

  Suppose that there are distinct \(r,s\in N\) such that \(a_r, a_s<0\).  Let
  \(u\) be a point on the hyperplane \(ax=0\) and choose any
  \(\varepsilon>0\). Let \(z\in\NN\) have components \(z_r=-a_s\),
  \(z_s=a_r\), all other components being 0; then, \(az=0\), and both
  \(u+\varepsilon z\) and \(u-\varepsilon z\) lie in the hyperplane. Let
  \(v=(u+\varepsilon z)\oplus (u-\varepsilon z)\); then,
  \(v_s=u_s-\varepsilon a_r\), \(v_r=u_r-\varepsilon a_s\), and \(v_i=u_i\)
  otherwise.  But \(av=au -2\varepsilon a_r a_s<0\), so \(v\not\in H\).
\end{proof}

\begin{teo}\label{op-closed}
  A full dimensional cone is \maxclosed if only if for every facet
  inequality \(ax\geq 0\), \(a\) has at most one negative component.
\end{teo}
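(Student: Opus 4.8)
The plan is to exploit that a full dimensional cone \CC equals the intersection of the half-spaces \(H_a=\conj{x\in\RN}{ax\geq 0}\) as \(a\) ranges over the (essentially unique) facet inequalities, and to transfer the purely local analysis of \cref{mc-hyp} to \CC itself by means of an interior point of a facet.

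For the easy (\(\Leftarrow\)) direction, suppose every facet inequality \(ax\geq 0\) has at most one negative component. Then \cref{mc-hyp}(a) makes each \(H_a\) \maxclosed. Since the \(\max\) of two vectors lying in a common \(H_a\) stays in \(H_a\), the intersection \(\CC=\bigcap_a H_a\) is \maxclosed as well: any \(u,v\in\CC\) lie in every \(H_a\), so \(u\op v\in H_a\) for all \(a\), whence \(u\op v\in\CC\).

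The content is in the (\(\Rightarrow\)) direction, which I would argue by contraposition. Suppose some facet inequality \(ax\geq 0\) has two negative components \(a_r,a_s<0\), and let \(F\) be the facet it induces. Choose a point \(p\) interior to \(F\), so that \(ap=0\) while every other facet inequality holds strictly at \(p\). Borrow the direction \(z\) from the proof of \cref{mc-hyp}(b), namely \(z_r=-a_s,\ z_s=a_r\) and \(z_i=0\) otherwise, which satisfies \(az=0\). The key step is to check that both \(p+\varepsilon z\) and \(p-\varepsilon z\) lie in \CC for all sufficiently small \(\varepsilon>0\): each satisfies \(a(p\pm\varepsilon z)=ap\pm\varepsilon az=0\), and, since they approach \(p\) as \(\varepsilon\to 0\) while \(p\) satisfies all remaining facet inequalities strictly, continuity keeps those inequalities strict for small \(\varepsilon\); as \CC is the intersection of its facet half-spaces, this places \(p\pm\varepsilon z\) in \CC.

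Now the \maxclosed hypothesis would force \(v=(p+\varepsilon z)\op(p-\varepsilon z)\in\CC\subseteq H_a\). But the computation already performed in \cref{mc-hyp}(b) gives \(av=ap-2\varepsilon a_ra_s=-2\varepsilon a_ra_s<0\), using \(a_ra_s>0\), so \(v\notin H_a\), a contradiction. Hence no facet inequality can have two or more negative components. The only real subtlety, and the step I expect to need the most care, is precisely the verification that the perturbed points remain inside \CC; this is exactly what an interior point of the facet buys us, letting the local obstruction of \cref{mc-hyp}(b) be realized by genuine points of the cone.
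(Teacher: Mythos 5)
Your proof is correct and takes essentially the same route as the paper: the \(\Leftarrow\) direction realizes the cone as an intersection of \maxclosed half-spaces via \cref{mc-hyp}(a), and the \(\Rightarrow\) direction applies the perturbation of \cref{mc-hyp}(b) at an interior point of the offending facet, using strictness of the remaining facet inequalities to keep the perturbed points inside the cone. The only difference is that you unpack the paper's terse citation of \cref{mc-hyp}(b) into its explicit construction, which is a matter of detail rather than of method.
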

\begin{proof}
  One one hand, if the inequalities are of the form given, the cone is an
  intersection of \maxclosed half-spaces, hence \maxclosed.

  If a facet inequality of a cone is not of the specified form, then, applying
  Prop. \ref{mc-hyp} to a neighborhood of an interior point of that facet,
  we see that the cone is not \maxclosed.
\end{proof}

\begin{cor}\label{max-closed}
  A full dimensional nonnegative cone is \maxclosed if
  only if every facet inequality is either of form \(x_j\geq 0\) or
  \(ax\geq 0\), where \(a\) has exactly one negative component and at least
  one positive component.
\end{cor}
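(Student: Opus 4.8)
The plan is to bootstrap from \cref{op-closed}. By that theorem, a full-dimensional cone is \maxclosed exactly when each of its facet inequalities \(ax\geq 0\) has at most one negative component, so the whole task reduces to checking that, for a nonnegative cone \(\CC\subseteq\R_+^N\), the condition ``at most one negative component'' coincides with being of one of the two advertised shapes. One implication is free: an inequality \(x_j\geq 0\) has no negative component, and an inequality with exactly one negative component trivially has at most one; hence a cone all of whose facet inequalities take those shapes satisfies the hypothesis of \cref{op-closed} and is \maxclosed. The content lies in the reverse reading, which I would split according to the number of negative components of a facet inequality.

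First I would dispatch the case \(a\geq 0\), showing that the inequality must then be a coordinate inequality. Since \(\CC\subseteq\R_+^N\), for \(x\in\CC\) the expression \(ax=\sum_i a_ix_i\) is a sum of nonnegative terms, so the facet \(F=\conj{x\in\CC}{ax=0}\) it induces equals \(\conj{x\in\CC}{x_i=0\ \text{for all}\ i\in\supp(a)}\). As \(a\neq 0\), fix any \(r\in\supp(a)\) and set \(\FN_r=\conj{x\in\CC}{x_r=0}\). Full-dimensionality forbids \(\CC\) from lying in the hyperplane \(x_r=0\), so \(\FN_r\) is a proper face, and clearly \(F\subseteq\FN_r\subsetneq\CC\). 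Maximality of the facet \(F\) then forces \(F=\FN_r\); that is, the valid inequality \(x_r\geq 0\) induces the very same facet. By uniqueness of facet inequalities for full-dimensional cones, \(a\) is a positive multiple of \(e_r\), whence \(\supp(a)=\{r\}\) and the inequality is, up to positive scale, \(x_r\geq 0\).

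It remains to treat an \(a\) with exactly one negative component, say \(a_r<0\); here I must rule out that all other components vanish. If they did, \(a\) would be a negative multiple of \(e_r\), so \(x_r\leq 0\) would be valid for \(\CC\); together with \(\CC\subseteq\R_+^N\) this forces \(x_r=0\) on all of \(\CC\), again contradicting full-dimensionality. Hence \(a\) has at least one positive component, matching the second advertised shape, and the characterization is complete. The one delicate step is the \(a\geq 0\) case: the argument hinges on recognizing the induced facet as a coordinate face \(\FN_r\) and then invoking uniqueness of facet inequalities to pin \(a\) down to a single \(e_r\); everything else is bookkeeping already licensed by \cref{op-closed} and by nonnegativity of \(\CC\).
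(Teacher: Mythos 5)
Your proof is correct and takes essentially the same route as the paper: reduce everything to \cref{op-closed}, then use nonnegativity of the cone together with full-dimensionality to show a facet inequality with no negative component must be a coordinate inequality and one with a single negative component must also have a positive one. The only difference is one of detail—you spell out the \(a\geq 0\) case (the facet lies in a coordinate face \(\FN_r\), equals it by maximality, so \(a\) is a positive multiple of \(e_r\) by uniqueness of facet inequalities), a step the paper compresses into a single terse sentence.
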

\begin{proof}
  Since each \(x_j\geq 0\) is a valid inequality for \CC, all facet
  inequalities \(ax\geq 0\) not of this type must have at least one negative
  coefficient, say, \(a_i<0\).  By \cref{op-closed}, it is exactly one and
  there must be a \(j\) such that \(a_j>0\), otherwise, any \(x\in\CC\) would
  satisfy \(x_i\leq 0\) and be nonnegative, whence, \(x_i=0\), contradicting
  full dimension.
\end{proof}

\section{\op-automorphisms}\label{sec:op-automorphisms}

In \RN, \(x\leq y\) if and only if \(x\op y=y\), so any
\op-automorphism also preserves \(\leq\).  We recall that in a partial
order, \(y\) \emph{covers} \(x\) if \(x\lneq y\) and there is no third
element \(z\) such that \(x\lneq z \lneq y\).  In \ZN, \(y\) covers
\(x\) if and only if \(y=x+e_i\), for some \(i\).  When dealing with
integer vectors, we will use interval notation to refer
implicitly to \ZN, that is \([x,y]=\conj{z\in\ZN}{x\leq z\leq y}\).

We will be concerned here with \op-automorphisms of integer \maxclosed
cones only.  On one hand, this is motivated by our interest in
exponential matrices; on the other hand, \cref{rat-cones} presents a
brief discussion on real and rational cones in this context, and their
difficulties.  Still, some facts that help tame additive automorphisms
of integer \maxclosed cones do not hold for \op-automorphisms, and we
will need a few additional hypotheses on the cone.

\begin{example}\label{op-nonrespecting}
  Here we show a family of \maxclosed cones, and a \op-automorphism of
  the corresponding integer cones which cannot be extended to an
  additive automorphism.  An intuitive geometric explanation for that
  is that the cones are ``too thin''.  For any positive integer \(k\),
  let \(\CC_k\) be the 2-dimensional cone given by:
  \[
    \biggl\{\begin{array}[c]{rcrl}
              -k\,x  & + & (k+1)y & \geq 0   \\
              (k+1)x & - & k\,y   & \geq 0 .
            \end{array}
  \]
  One readily verifies that \(\CC_k\) is symmetric about the line \(x-y=0\).

  The points \(p=(k+1,k)\) and \(q=(k,k+1)\) are special here: in the lattice
  \(\CC_k\cap\Z^2\) each of them covers only \((k,k)\) and is covered only by
  \((k+1,k+1)\).  To see this, suppose \((x,y)\in \CC_k\cap\Z^2\backslash\{p,q\}\)
  satisfies \((x,y)\geq p\).  We want to show that \((x,y)\geq (k+1,k+1)\); if
  \(y>k\), then \(y\geq k+1\) and we are done, and the case \(y=k\) would require
  \(x>k+1\), which is ruled out by the first defining inequality.  All remaining
  verifications are similar.

  It follows that the involution on \(\CC_k\cap\Z^2\) that interchanges \(p\) and
  \(q\) and fixes all other points is order preserving - hence a \op-automorphism
  of the integer cone.

  Since this map moves \(p\) and fixes \(2p\), it cannot be extended to an
  additive map.
\end{example}

We say that an arbitrary \(S\subseteq\RN\) is \emph{very full} if, for
every \iinN, \(\one\pm e_i\in S\) (recall that \one denotes the vector
of all 1's).

\begin{pro}\label{cone-very-full}
	A   cone is very full if and
	only if for every facet inequality \(ax\geq 0\), one has that
	\(a\one\geq\max_\iinN |a_i|\).
\end{pro}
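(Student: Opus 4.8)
The plan is to unwind the definition of \emph{very full} directly against the \Hd\ of the cone. Since the statement speaks of facet inequalities, the ambient cone \CC is full-dimensional, so (as recalled in \cref{preliminaries}) its facet-inequalities comprise its minimal \Hd; consequently a point \(x\) lies in \CC if and only if \(ax\geq 0\) for every facet inequality \(ax\geq 0\). By definition, \CC is very full exactly when \(\one+e_i\in\CC\) and \(\one-e_i\in\CC\) for every \iinN, which by the previous sentence is the condition that \(a(\one+e_i)\geq 0\) and \(a(\one-e_i)\geq 0\) hold for every \iinN\ and every facet inequality \(a\).

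The computational heart is a one-line evaluation: since \(a\one=\sum_j a_j\) and \(ae_i=a_i\), we have
\[
  a(\one+e_i)=a\one+a_i\qquad\text{and}\qquad a(\one-e_i)=a\one-a_i.
\]
Both of these are nonnegative precisely when \(a\one\geq a_i\) and \(a\one\geq -a_i\), that is, precisely when \(a\one\geq|a_i|\). Thus, for a fixed facet inequality \(a\), the pair \(\one\pm e_i\) satisfies \(a\) if and only if \(a\one\geq|a_i|\).

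It remains only to assemble the quantifiers. Being very full is the assertion that for every \iinN\ and every facet inequality \(a\) one has \(a\one\geq|a_i|\); interchanging the two (harmless) universal quantifiers, this is the assertion that for every facet inequality \(a\) and every \iinN\ one has \(a\one\geq|a_i|\), which is exactly \(a\one\geq\max_\iinN|a_i|\) for every facet inequality. Every step is an \emph{iff}, so the equivalence holds in both directions. There is no genuine obstacle here; the only points deserving care are the appeal to full-dimensionality, so that facet inequalities alone detect membership in \CC, and the innocuous interchange of the quantifiers over coordinates and over facets.
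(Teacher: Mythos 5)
Your proposal is correct and follows essentially the same route as the paper: both proofs reduce to the one-line evaluation \(a(\one\pm e_i)=a\one\pm a_i\geq 0\) if and only if \(a\one\geq|a_i|\), applied to every facet inequality and every \(i\in N\). The only difference is that you spell out explicitly that full-dimensionality makes the facet inequalities a complete \Hd (so that they detect membership in the cone), a point the paper leaves implicit.
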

\begin{proof}
	Consider a vector \(w=\one+\alpha e_i\), where \(\alpha=\pm 1\). Then,
	\(aw\geq 0\) if and only if \(a\one\geq -\alpha a_i\), and that
	happens if
	and only if \(a\one\geq |a_i|\). 
\end{proof}

If \CC is a subset of \RN, and \phi is a bijective map from \CC to
itself, we will say that \phi is \emph{permutational} if there
exists a permutation \(\pi\) of \(N\) such that for every
\(a=(a_i)\in\CC\), \(\phi(a)_{\pi(i)}=a_i\) for all \(i\in N\).
That means that \phi is the restriction to \CC of the linear map
whose action on the canonical basis \((e_i)_\iinN\) is given by
\(e_i\mapsto e_{\pi(i)}\).

Note that any permutational map is an additive homomorphism, a
\op-homomorphism and an isometry.

\begin{teo}\label{pro-veryfull}
	Let \CC be a very full subset of \NN closed under \(+\) and \op.
	Then, every
	\op-automorphism fixing a non-zero multiple of \one is permutational.
\end{teo}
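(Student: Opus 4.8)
The plan is to exploit that $\phi$, preserving $\oplus$, also preserves the order $\le$ (since $x\le y\iff x\oplus y=y$) and hence all covers, joins, and meets that exist in $(\CC,\le)$.

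First I would pin down the local structure at the diagonal. Very fullness gives $\one\pm e_i\in\CC$, hence $\one=(\one-e_i)\oplus(\one-e_j)\in\CC$ and, adding multiples of $\one$, $m\one\pm e_i\in\CC$ for every $m\ge1$. Since $\CC\subseteq\NN$, nothing lies strictly between $m\one$ and $m\one\pm e_i$, so the lower covers of $m\one$ in $(\CC,\le)$ are exactly $\{m\one-e_i\}_{i\in N}$ and the upper covers exactly $\{m\one+e_i\}_{i\in N}$ --- here very fullness is essential to guarantee these are all present. An automorphism fixing $m\one$ permutes each of these sets, the join of the upper covers is $(m+1)\one$ while the meet of the lower covers is $(m-1)\one$; as the $\oplus$-join and the $\le$-meet of a set that is merely permuted are fixed, fixing $m\one$ forces fixing $(m\pm1)\one$. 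Starting from the hypothesis $\phi(c\one)=c\one$ and inducting in both directions, $\phi$ fixes the entire diagonal $\{m\one:m\ge1\}$; in particular $\phi(\one)=\one$.

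Since $\phi$ fixes $\one$, it permutes the lower covers $\{\one-e_i\}$, say $\phi(\one-e_i)=\one-e_{\pi(i)}$; I claim $\phi$ is the coordinate permutation $P_\pi$. It is convenient to carry $\pi$ through the following induction (equivalently, after replacing $\phi$ by $P_\pi^{-1}\phi$ one assumes $\phi$ fixes each $\one-e_i$, the invariance $P_\pi(\CC)=\CC$ then emerging a posteriori). Writing the ``axis points'' $L\one-se_i$ (all in $\CC$, as $L\one-se_i=(L-s)\one+s(\one-e_i)$), I would prove by induction on $L$ that $\phi(L\one-se_i)=L\one-se_{\pi(i)}$ for all $0\le s\le L$. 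The base $L=1$ is the definition of $\pi$ together with $\phi(\one)=\one$. For the inductive step the crux is the extremal ``bottom'' $L(\one-e_i)$: it is the $\oplus$-join of $\{z:z_i=0\}$ inside the interval $[(L-1)(\one-e_i),L\one]\cap\CC$, and within that interval the condition $z_i=0$ is exactly $z\not\ge(L-1)(\one-e_i)+e_i$; since the interval's endpoints and the point $(L-1)(\one-e_i)+e_i$ are already known (they are lower-level axis points), $\phi$ preserves this subset and hence fixes its join, up to the relabelling by $\pi$. The remaining axis points follow from the identity $L\one-se_i=\bigl((L-1)\one-(s-1)e_i\bigr)\oplus L(\one-e_i)$, both factors being already controlled.

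Finally, an arbitrary $A\in\CC$ is recovered as a meet of axis points: with $L=\max_k A_k$ and $y^{(i)}=L\one-(L-A_i)e_i$ one has $A=\bigwedge_{i\in N}y^{(i)}$, the meet being taken in $(\CC,\le)$ (it is a greatest lower bound and lies in $\CC$, namely it equals $A$). Applying $\phi$ and the axis computation gives $\phi(A)=\bigwedge_i\bigl(L\one-(L-A_i)e_{\pi(i)}\bigr)$, whose $\pi(i)$-th coordinate is $A_i$; that is precisely $\phi(A)_{\pi(i)}=A_i$, so $\phi=P_\pi$ is permutational. I expect the main obstacle to be the bottom step: the points $L(\one-e_i)$ are join-irreducible and cannot be built from simpler fixed points by $\oplus$ alone, so a genuinely order-theoretic input --- the interval ``carving'' argument above, which leans on very fullness to supply the needed neighbouring points --- is required. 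A secondary point to treat with care is verifying that all auxiliary vectors lie in $\CC$ and that the displayed meet is computed in $(\CC,\le)$ rather than merely in $\ZN$.
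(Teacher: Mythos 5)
Most of your argument is sound and runs parallel to the paper's proof: fixing the whole diagonal by inducting up and down through the covers of \(k\one\) is exactly the paper's Claim~1 (the paper phrases the downward step as \(\op\conj{x\in\CC}{x\leq k\one-e_i\ \text{for all}\ \iinN}=(k-1)\one\), which is your meet of the lower covers), and the permutation \(\pi\) is extracted from the lower covers of \one in the same way. Your interval-carving induction on all the axis points \(L\one-se_i\) is a correct, if heavier, substitute for the paper's Claim~2, which pins down only the extreme points \(k(\one-e_i)\) by characterizing them inside \CC as \(\conj{w\in\CC}{|[0,w]\cap B^1|=1=|[w,k\one]\cap B^k|}\), where \(B^k=\conj{k\one-e_i}{\iinN}\). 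In that part all your membership claims check out (\(L\one-se_i=(L-s)\one+s(\one-e_i)\in\CC\), the carving set is finite and mapped onto its \(\pi\)-relabelled twin, and \(\op\)-joins of finite corresponding sets are preserved), so everything through the axis computation is fine.

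The genuine gap is in the last step. From \(A=\bigwedge_i y^{(i)}\) in \((\CC,\leq)\) you correctly get \(\phi(A)=\bigwedge_i\phi(y^{(i)})\), the meet taken in \((\CC,\leq)\), since an order isomorphism preserves existing meets. But the clause ``whose \(\pi(i)\)-th coordinate is \(A_i\)'' silently identifies this meet with the \emph{coordinatewise minimum} of the image family, which is \(P_\pi(A)\); that identification is exactly what you do not yet have, because it amounts to knowing \(P_\pi(A)\in\CC\) --- essentially the invariance \(P_\pi(\CC)=\CC\) that you yourself defer ``a posteriori''. (For the domain family the identification was legitimate only because the coordinatewise minimum happened to equal \(A\in\CC\); for the image family no such element of \CC is in evidence.) What the meet argument actually yields is the one-sided bound \(\phi(A)\leq P_\pi(A)\), i.e.\ \(\phi(A)_{\pi(i)}\leq A_i\). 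The repair is cheap: \(\phi^{-1}\) is also a \op-automorphism fixing \(r\one\), its associated permutation is \(\pi^{-1}\), so the same bound applied to \(\phi^{-1}\) at the point \(\phi(A)\) gives \(A_i\leq\phi(A)_{\pi(i)}\), whence equality. It is worth noting how the paper sidesteps meets entirely: choosing \(k\) larger than every entry of \(v\) and of \(\phi(v)\), the interval \([k(\one-e_i),\,k(\one-e_i)\oplus v]\) in \CC is a chain with exactly \(v_i+1\) elements (all of the form \(k(\one-e_i)\oplus t\one\), which lie in \CC by \op-closure), and \(\phi\) carries it bijectively onto a chain with \(\phi(v)_{\pi(i)}+1\) elements, so \(\phi(v)_{\pi(i)}=v_i\) follows by counting, with no meet-existence or meet-computation questions arising.
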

\begin{proof}
  Notice that \(\one = (\one -e_1)\oplus (\one-e_2)\in \CC\), since
  \(\one\pm e_i\in\CC\). Furthermore, for every integer \(k>0\),
  \iinN, \(k\one\pm e_i=(k-1)\one+\one\pm e_i\in\CC\). It will be
  convenient to denote \(b_i^k=k\one-e_i\), and
  \(B^k=\conj{b_i^k}{\iinN}\).

	Let \phi be a \op-automorphism fixing \(r\one\).

	We will proceed through a series of claims.
	\vspace{-1ex}
	\begin{description}
		\item [\emph{Claim 1}] For every integer \(k\geq 0\), \phi
		      fixes \(k\one\).\\
		      It is enough to prove that, for \(k>0\), if \phi fixes \(k\one\),
		      then it fixes both \((k+1)\one\) and \((k-1)\one\), and then
		      proceed by induction up and down, starting from \(r\one\).

		      The set of vectors covering \(k\one\) is invariant under \phi.
		      Those are \(\{k\one+e_i\}_\iinN\), and so
		      \((k+1)\one=\oplus_\iinN(k\one+e_i)\) is fixed by \phi. Also,  the
		      set  \(B^k\) of vectors covered by \(k\one\) is invariant, so
		      \(\op\conj{x\in\CC}{x\leq b_i^k,\,\text{for all
				      \iinN}}=(k-1)\one\) is also fixed, and the claim is proved.
	\end{description}

	Notice that for every integer $k\geq 1$, the set \(B^k\) consists
	of the coatoms of the interval \([0,k\one]\) in \ZN, and,
	\textsl{a fortiori}, in \CC, so, by Claim 1, \(B^k\) is invariant
	under \phi. Let \(\pi\in S_N\) be the permutation defined by
	\(\phi(b_i^1)=b_{\pi(i)}^1\).

	\begin{description}
            \item [\emph{Claim 2}] For every \(k\geq 1\), \(i\in N\),
          \(\phi(kb_i^1)=kb_{\pi(i)}^1\).\\
          Fix a \(k>1\).  We know already from the proof of Claim 1,
          that $\phi$ permutes the vectors from \(B^k\). Let us show
          $\phi$ also permutes \((kb_i^1)_\iinN\).  This set is
          precisely
          \[\conj{w\in\CC}{|[0,w]\cap B^1|=1=|[w,k\one]\cap B^k|},\]
          which shows our set is invariant under \phi.  Since one must
          have \(\phi(b_i^1)\leq\phi(kb_i^1)\), the claim follows.

            \item [\emph{Claim 3}] For every \(k \geq 1\),\(i\in N\),
          the interval \([kb_i^1,k\one]\) in \CC is a chain of
          height $k$ (i.e. length \( k+1\)).\\
          Clearly this interval consists of the vectors
          \(kb_i^1\op t\one\), \(0\leq t\leq k\), which gives the
          claim.
	\end{description}
	Now we finish the proof.  Given any \(v\in\CC\), we want to show that
	\(\phi(v)_{\pi(i)}=v_i\), for each \(i\).  Choose \(k\) bigger than any
	component of \(v\), and of \(\phi(v)\). The vector \(kb_i^1\op v\) has all
	components \(k\), except for the \(i^\text{th}\), which equals \(v_i\).  So,
	the interval \([kb_i^1,kb_i^1\op v]\) is a chain of height \(v_i\). It is
	mapped bijectively by \phi to \([kb_{\pi(i)}^1,kb_{\pi(i)}^1\op \phi(v)]\),
	which is a chain of height \(\phi(v)_{\pi(i)}\).  It follows that
	\(\phi(v)_{\pi(i)}=v_i\), as claimed.
\end{proof}

The examples below illustrate the precision of \cref{pro-veryfull}.


\begin{example}
	The set \CC needs not be an integer polyhedral cone: fix a real
	\(0<\alpha< 1\), and for \(n\geq 3\), let
	\(\CC=\conj{x\in\N^n}{\sum_{i\neq j}x_i\geq x_j^\alpha, j=1,\ldots,n}\).
	Since $x^\alpha +y^\alpha \geq (x+y)^\alpha$ for all $x, y\in \R_+$, this is
	an additive submonoid of \(\N^n\), and it is clearly very full. Following
	the proof of \cref{mc-hyp}, we see that \CC is also closed under \op.
	The elements of \CC with minimal support are all vectors with exactly two
	ones and zeros elsewhere; any \op-automorphism keeps this set invariant,
	hence it fixes \one, its max. \cref{pro-veryfull} implies that every
	\op-automorphism is permutational (by the description symmetry, any
	permutation of coordinates yields a \op-automorphism). On the other hand, it
	is an exercise to show that the smallest polyhedral cone containing \CC is the
	positive orthant, which is, of course, too large, as \(e_1\notin\CC\).
\end{example}

\begin{example}
	Here we show that the requirement of being an additive monoid cannot be
	simply dismissed.  For each \(k\in\N\), let
	\(D_k=\conj{v\in\NN}{k\one\leq v\leq(k+1)\one}\), and let
	\(D=\cup_{k\in\N}D_k\). Then \(D\) is \op-submonoid of \NN, and very full,
	but it is not closed under \(+\).  Choose, for each \(k\), a permutation
	\(\pi_k\in S_N\), and let \(T\) be the map that acts like \(\pi_k\) on the
	layer \(D_k\).  This is a \op-automorphism, but, unless all the \(\pi_k\)
	are equal, it is not permutational.
\end{example}

\begin{example}\label{rat-cones}
  If \CC is a real, full dimensional \op-closed cone, then every
  \op-automorphism is fully determined by its action on
  \(\CC\cap\QN\).  That is because each \(x\in\CC\) is the greatest
  lower bound of the set \(\conj{y\in\CC\cap\QN}{x\leq y}\).  On the
  other hand, the rational cone \Qpn admits a quite complicated
  automorphism group: choose on each component an increasing function
  on \(\Q_+\); the whole choice can even be done so as to fix \NN.
  Actually, in the same vein as in \cite{AS}, \cref{op-auto-veryfull}
  is likely also true for rational and real cones, \emph{except} for
  orthants, but we do not pursue this here.
\end{example}

\begin{lemma}\label{op-fix-one}
	Let \CC be a \op-submonoid of \NN. Assume also that \CC is fixed by a group
	of permutational maps that is transitive on the canonical
	basis.  Then every \op-automorphism of \CC fixes a multiple of \one.
\end{lemma}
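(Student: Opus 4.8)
The plan is to pin down a single element of \CC\ that is forced both to be a multiple of \one\ and to be fixed by an arbitrary \op-automorphism; the natural candidate is the join of the minimal nonzero elements of \CC. Throughout I use that, since $x\leq y$ iff $x\op y=y$, every \op-automorphism \phi of \CC\ is an order automorphism of $(\CC,\leq)$. As the \op-identity $0$ lies in \CC\ and is its least element, $\phi(0)=0$, and consequently \phi maps the set $A$ of minimal nonzero elements of \CC\ bijectively onto itself.

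The crucial structural input is \emph{Dickson's lemma}: the minimal elements of any subset of \NN\ form a finite antichain, so $A$ is finite. This lets me set $m=\bigoplus_{a\in A}a$, a genuine element of \NN, and in fact of \CC\ since \CC\ is closed under finite \op. Because \phi permutes $A$ and respects \op, $\phi(m)=\bigoplus_{a\in A}\phi(a)=m$, so $m$ is fixed. I expect this finiteness to be the only delicate point: without it the join defining $m$ need not lie in \NN\ at all, let alone in \CC.

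It remains to identify $m$ as a multiple of \one, and here the transitive group $G$ enters. Each member of $G$ is a permutational map sending \CC\ onto itself, hence a \op-automorphism of \CC, so by the argument just used it permutes $A$ and fixes $m$. Thus $m$ is invariant under all of $G$, meaning its coordinates are constant on the orbits of $G$ on $N$; since that action is transitive, $m=c\one$ for a single $c\geq 0$. Finally, if $\CC\neq\{0\}$ then $A$ is nonempty and any $a\in A$ has a positive coordinate, forcing $c>0$ and yielding the desired fixed nonzero multiple of \one, the case $\CC=\{0\}$ being vacuous.
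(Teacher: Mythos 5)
Your proof is correct and follows essentially the same route as the paper's: Dickson's Lemma gives a finite set of minimal nonzero vectors, its \op-sum is fixed by every \op-automorphism, and invariance under the transitive permutational group forces that vector to be a multiple of \one. You simply spell out details the paper leaves implicit (that \op-automorphisms are order automorphisms fixing $0$, hence permute the minimal elements, and the degenerate case $\CC=\{0\}$).
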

\begin{proof}
	By Dickson's Lemma \cite{Di}, \cite{Go}, \CC has a finite set of
	minimal non-zero vectors; this set is invariant under any
	\op-automorphism of \CC, so its \op-sum is fixed by those
	automorphisms.  That is a non-zero vector fixed by a
	transitive permutation group, so it is a positive multiple of \one.
\end{proof}

Combining \cref{pro-veryfull} with
\cref{op-fix-one}, we obtain:

\begin{cor}\label{op-auto-veryfull}
	Let \CC be a very full subset of \NN closed under \(+\) and \op. Assume also
	that \CC is fixed by a group of permutational maps that is transitive on the
	canonical basis.  Then every \op-automorphism of \CC is permutational.
\end{cor}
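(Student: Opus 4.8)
The plan is to deduce \cref{op-auto-veryfull} by straightforwardly combining the two results it is stated between, so the main work is simply checking that the hypotheses of \cref{pro-veryfull} are met once we know a multiple of \one is fixed. Let \CC be a very full \op-submonoid of \NN that is also closed under \(+\), and assume it is invariant under a group \(G\) of permutational maps acting transitively on the canonical basis \((e_i)_\iinN\). Let \phi be an arbitrary \op-automorphism of \CC; I want to show \phi is permutational.

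First I would invoke \cref{op-fix-one}: since \CC is a \op-submonoid of \NN fixed by a transitive group of permutational maps, that lemma applies verbatim and guarantees that \phi fixes some positive multiple \(r\one\) of \one. The only thing to verify is that the hypotheses of \cref{op-fix-one} are a subset of those we have assumed here, which they are — we ask for exactly ``\op-submonoid of \NN'' plus the transitive permutational group, and both are in force.

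With a fixed multiple of \one in hand, I would then apply \cref{pro-veryfull}. That theorem requires \CC to be a very full subset of \NN closed under \(+\) and \op, and it requires \phi to be a \op-automorphism fixing a non-zero multiple of \one; all of these are now established (the closure and very-fullness are among our standing hypotheses, and the fixed multiple comes from the previous paragraph). Its conclusion is precisely that \phi is permutational, which is exactly what we want.

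The only genuine subtlety — and it is a mild one — is confirming that the multiple of \one produced by \cref{op-fix-one} is the same kind of object that \cref{pro-veryfull} consumes: a single fixed point \(r\one\) with \(r>0\). Since \cref{op-fix-one} delivers a \emph{positive} multiple of \one (the \op-sum of the nonempty finite set of minimal nonzero vectors is nonzero, and a nonzero vector fixed by a transitive permutation group must have all coordinates equal and positive), there is no gap. Thus no step is a real obstacle; the corollary is an immediate consequence of chaining the two prior results, and the proof reduces to one or two sentences naming them.
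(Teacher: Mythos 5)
Your proof is correct and is exactly the paper's argument: the paper derives \cref{op-auto-veryfull} by the same two-step chaining, first invoking \cref{op-fix-one} to obtain a fixed positive multiple of \one and then applying \cref{pro-veryfull} to conclude the automorphism is permutational. Your extra check that the fixed multiple is genuinely non-zero (so that \cref{pro-veryfull} applies) is the right detail to verify, and it matches what the proof of \cref{op-fix-one} guarantees.
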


Let us apply this now to \En.  This is very full, closed under \(+\) and \op,
and \(\Sn\subseteq\Aut(\En,\op)\) is a group as required by
\cref{op-auto-veryfull}, so every \op-automorphism of \En is permutational.
As permutational maps are linear, we have that
\(\Aut(\En,\op)\subseteq \LinZ(\En)=\Sn\), and we have proved:

\begin{teo}\mbox{\rm\cite[Theorem 4.3]{DKKP}}
	\(\Aut(\En,\op)=\Sn.\)
\end{teo}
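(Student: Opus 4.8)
The plan is to derive this as an immediate instance of \cref{op-auto-veryfull} applied to \En, so that the real task is to check that \En\ meets the three hypotheses of that corollary and then to identify the resulting permutational maps with \Sn. To begin, $\En=\haten\cap\Z^n$ is an additive submonoid of $\N^{N_n}$ simply because it is the integer cone of a rational cone.

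Next I would verify the two structural hypotheses. For closure under \op: by \cref{facet-ineq} the facets of \haten\ are precisely the triangle and nonnegativity inequalities, and each of their coefficient vectors has at most one negative entry (the triangle inequality $x_{ij}+x_{jk}-x_{ik}\geq 0$ has its single negative coefficient on $x_{ik}$, and the nonnegativity inequalities have none); since \haten\ is full-dimensional, \cref{op-closed} shows it is \maxclosed, and \cref{prop-4.:1} transfers this to \En. For very fullness I would invoke \cref{cone-very-full}: each facet inequality $ax\geq 0$ satisfies $a\one\geq\max_i|a_i|$, which reads $1\geq 1$ in both the triangle and the nonnegativity case, and since the witnessing vectors $\one\pm e_i$ are integral, \En\ is itself very full. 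Finally, \Sn\ consists of permutational maps — the index permutations \ppi\ and the transpose $\tau$ each permute the coordinate set $N_n$ — it fixes \En, and it is transitive on the canonical basis, since already $S_n$ carries any ordered pair of distinct indices to any other.

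With the hypotheses in place, \cref{op-auto-veryfull} yields that every \op-automorphism of \En\ is permutational. Such a map is the restriction of an integral linear automorphism of the ambient space leaving \En\ invariant, so $\Aut(\En,\op)\subseteq\LinZ(\En)$. By definition $\LinZ(\En)$ is the group of linear automorphisms preserving $\En=\haten\cap\Z^n$, hence coincides with $\LinZ(\haten)$, which equals \Sn\ by \cref{allequal}. The reverse inclusion $\Sn\subseteq\Aut(\En,\op)$ is clear, as permutational maps are \op-homomorphisms and the elements of \Sn\ preserve \En; equality follows.

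At the level of this theorem there is essentially no obstacle left: all the genuine difficulty has already been absorbed into \cref{op-auto-veryfull} (which in turn rests on \cref{pro-veryfull} and \cref{op-fix-one}) and into \cref{allequal}. The only points requiring care are bookkeeping — confirming each hypothesis of the corollary for \En, and reading off $\LinZ(\En)=\Sn$ — together with the small conceptual note that \op-automorphisms are \emph{a priori} only order-preserving bijections, so that being forced to be permutational, hence linear, is exactly the content that lets us fall back on the linear-symmetry result \cref{allequal}.
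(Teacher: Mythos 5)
Your proposal is correct and follows essentially the same route as the paper: verify the hypotheses of \cref{op-auto-veryfull} for \En, conclude every \op-automorphism is permutational hence linear, and then identify $\Aut(\En,\op)\subseteq\LinZ(\En)=\Sn$ via \cref{allequal}. The only difference is one of detail, not substance: you spell out the verification of \maxclosed-ness and very fullness through \cref{op-closed}, \cref{prop-4.:1} and \cref{cone-very-full}, where the paper simply asserts these properties of \En.
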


Notice that \haten, as a rational polyhedral cone, satisfies the
following properties: it is very full, non-negative, pointed,
\maxclosed and each non-negativity inequality determines a
facet. The next example shows that even all of these
properties of a polyhedral cone are not enough to guarantee that each
additive automorphism is also a \op-automorphism.

\begin{example}
	Consider the cone
	\(\CC=\conj{x\in\Q^3}{x_1 +x_2\geq x_3, x_1,x_2,x_3\geq 0}\). The following
	hold:
	\begin{enumerate}
		\item \CC is very full, pointed, non-negative, \maxclosed.
		\item Each nonnegativity inequality of \CC determines a facet.
		\item There is an additive automorphism of \(\CC_\Z\) which is not permutational and
		      does not preserve \op.
	\end{enumerate}

	Clearly (a) is satisfied. Fact (b) is proved by the respective interior
	points \((0,2,1),(2,0,1),(1,1,0)\).

	The extreme rays of \CC are
	\[
		v_1 =(1, 0, 0), v_2 =(0, 1, 0), v_3 =(1, 0, 1), v_4 =(0, 1, 1).
	\]

	Let \(\psi\in GL(3,\Z)\) be given by the matrix
	\[
		\left(
		\begin{array}{rrr}
				1 & 1 & -1 \\
				0 & 0 & 1  \\
				0 & 1 & 0
			\end{array}
		\right)
	\]

	Then \(\psi\) fixes \(v_1\) and \(v_4\) and interchanges \(v_2\) and
	\(v_3\).  As it permutes extreme rays, it leaves \CC invariant, and is an
	additive automorphism of \(\CC_\Z\).  On the other hand,
	\(\psi(v_1\op v_3)\neq\psi(v_1)\op\psi(v_3)\), showing (c).
\end{example}

\bibliography{expmatr2}{}
\bibliographystyle{plainnat}

\end{document}